\documentclass[12pt]{article}

\usepackage{amssymb, amsmath, amsthm, graphicx, verbatim,float}
\usepackage[left=1in,top=1in,right=1in]{geometry}

      \theoremstyle{plain}
      \newtheorem{theorem}{Theorem}[section]
      \newtheorem{lemma}[theorem]{Lemma}
      \newtheorem{corollary}[theorem]{Corollary}
      \newtheorem{proposition}[theorem]{Proposition}
      
      \theoremstyle{definition}
      \newtheorem{definition}[theorem]{Definition}
      
      \theoremstyle{remark}
      \newtheorem{remark}[theorem]{Remark}

	\newcommand{\ZZ}{{\mathbb Z}}
	\newcommand{\RR}{{\mathbb R}}

\begin{document}

\title{Uniqueness of Ground States for Short-Range Spin Glasses in the Half-Plane}

\author{Louis-Pierre Arguin\footnote{Courant Institute of Mathematical Sciences,
New York University, New York, NY}
\and
Michael Damron\footnote{Mathematics Department, Princeton University, Princeton, NJ}
\and
C.M. Newman\footnotemark[1]
\and
D.L. Stein\footnote{Courant Institute of Mathematical Sciences and Physics Department,
New York University, New York, NY}
}
\date{November 2009}
\maketitle

\footnotetext{MSC2000: Primary 82B44, 82D30; Secondary 60K35}
\footnotetext{Keywords: Spin glass, ground states, half-plane, Edwards-Anderson}

\begin{abstract}
We consider the Edwards-Anderson Ising spin glass model on the half-plane
$\ZZ \times \ZZ^+$ with zero external field and a wide range of choices,
including mean zero Gaussian, for the common distribution of the collection
$J$ of i.i.d.~nearest neighbor couplings. The infinite-volume joint
distribution $\mathcal{K}(J,\alpha)$ of couplings~$J$ and ground state pairs~$\alpha$
with periodic (respectively, free) boundary conditions in the horizontal
(respectively, vertical) coordinate is shown to exist without need for subsequence
limits. Our main result is that for almost every $J$, the conditional distribution
$\mathcal{K}(\alpha~|~J)$ is supported on a {\it single\/} ground state pair. 
\end{abstract}

\section{Introduction}
\label{sec:intro}

\subsection{Background}
\label{subsec:background}
The problem of determining the number of distinct ground state pairs in
realistic spin glass models remains of primary importance to understanding
the nature of spin glasses~\cite{BY86,NS03}.  
It is a measure of the
difficulty of the problem that, despite decades of effort, it remains unresolved on a mathematically rigorous or even an
analytically but non-rigorous level for any
nontrivial dimension.

Of central interest is the Edwards-Anderson~(EA) Ising model~\cite{EA75} on
${\ZZ}^d$.  The model is defined by the Hamiltonian 
\begin{equation}
\label{eq:EA}
{\cal H}_J(\sigma)= -\sum_{\langle x,y\rangle} J_{xy} \sigma_x 
\sigma_y \ ,
\end{equation}
where $J$ denotes a specific realization of the couplings $J_{xy} =
J_{\langle x,y\rangle}$, the spins $\sigma_x=\pm 1$ and the sum is over
nearest-neighbor pairs $\langle x,y\rangle$ only, with the sites $x,y$ on
the cubic lattice ${\ZZ}^d$.  The $J_{xy}$'s are independently chosen from a
symmetric, continuous distribution with unbounded support, such as Gaussian
with mean zero.

Of course, for $d=1$, the multiplicity of infinite-volume ground states is
exactly two --- i.e., a single ground state pair (GSP) of spin
configurations related to each other by a global spin flip. At
the opposite extreme, the infinite-range
Sherrington-Kirkpatrick model~\cite{SK75}, which is expected to possess a
similar themodynamic structure to the EA model as $d \to \infty$, is known to
have (in an appropriate sense) 
an infinite number of GSP's~\cite{BY86,MPV87}. But for any
nontrivial dimension --- i.e., $2\le d<\infty$ --- there are very few
analytical results.  One exception is the {\it highly disordered
model\/}~\cite{NS94,NS96a}, in which a transition in the number of GSP's
from one at low dimensions to infinitely many at high dimensions is known
(although only partially proved),
with the probable crossover dimension predicted either as $d=8$~\cite{NS94} or $d=6$~\cite{JS09}. However, that model has an unusual (volume-dependent) coupling distribution, and 
should not be considered
``realistic.''

There have been efforts to solve the problem in two dimensions, whose
special properties and simplifications might lend itself more readily to
analytical approaches.  Although over the past decade numerical
simulations~\cite{Middleton99,PY99} have pointed toward a single pair of
ground states in the EA model in two dimensions, mathematically the problem
remains open, and the issue is not completely settled~\cite{Loebl04}. A
partial result due to Newman and Stein~\cite{NS00,NS01}, which we will make use
of here, supports the conjecture of a single GSP for $d=2$, but is not
inconsistent with many GSP's.

In this paper, we provide the first rigorous result for an EA model in
nontrivial dimension -- on the half-plane.  We begin with
finite-volume measures corresponding to joint distributions of the
couplings and ground states, and prove that these converge to a
unique limit.  More significantly, we also 
prove that the conditional distribution
of the limiting measure 
(for almost every coupling realization) is supported on a single GSP.
A technical tool that we will use to obtain these results is the
{\it metastate\/} \cite{AW90,NS96b}, which will be defined, reviewed and extended in
Section~\ref{sec:metastates}.

\subsection{Preliminaries}
\label{subsec:preliminaries}
We hereafter restrict ourselves to the EA spin glass model on the
half-plane.  As in the general case, we assign i.i.d.~random variables (the
{\it couplings}, with product measure $\nu$) to the
nearest-neighbor edges of the upper half-plane, whose vertex set is
\[ H = \{ (m,m')~:~m,m' \in \ZZ \mbox{ and } m' \geq 0\} \]
and whose edge set we denote by $E$.  Throughout the paper, we define the dual upper half-plane to be the graph with vertices
\[ H^* = \{(x+1/2,y-1/2)~:~(x,y) \in H\} \]
(the {\it dual vertices}) and with all nearest-neighbor edges 
(the {\it dual edges}), except for those between dual vertices in the 
{\it dual x-axis} 
\[ X^* = \{(x+1/2,-1/2)~:~ x \in \ZZ\} \ . \]

For each $n$, we consider the box
$\Lambda_n := [-n,n] \times [0,2n]$ and define the (random) energy on spin
configurations $\sigma \in \{-1,+1\}^{\Lambda_n}$ by
\begin{equation}\label{eq:energydef}
{\cal H}_n(\sigma) = - \sum_{\langle x,y\rangle \in E_n} J_{xy}\sigma_x\sigma_y \ ,
\end{equation}
where the sum is over all nearest-neighbor edges $E_n$ with at least one endpoint
in the box $\Lambda_n$. 
(We remark that we could as well consider rectangular boxes
$[-n,n] \times [0,n']$ and let $n,n' \to \infty$ independently
of each other, with essentially no changes to our arguments.)
We use periodic boundary conditions on the left and
right sides of the box, and free boundary conditions on the top and
bottom. In this case, the energy \eqref{eq:energydef} has the symmetry
$\sigma\mapsto -\sigma$. The relevant space is then 
$\{-1,+1\}^{\Lambda_n}$ modulo a global spinflip and we denote it by
$\tilde{\Sigma}_{\Lambda_n}$. More generally, we will write
$\tilde{\Sigma}_{A}$ for the set of configurations $\{-1,+1\}^A$ modulo spinflip for a
subset $A$ of $\ZZ^2$. 
Because the coupling
distribution is continuous, there exists (with probability one) a unique element of
$\tilde{\Sigma}_{\Lambda_n}$ with lowest energy.  For
each $n$, denote by $\alpha_{n,J}$ the pair of spin configurations of least
energy, i.e., the {\it ground state pair} (GSP) in $\Lambda_n$.

The results in this paper concern limits of the GSP's $\alpha_{n,J}$.  For
this, we need a definition of a GSP in infinite volume.  We will say that
$\alpha \in \tilde{\Sigma}_{H}$ is an {\it infinite-volume GSP} if 
for each dual circuit $C^*$ in $H^*$ and for each path $P^*$ in $H^*$ 
which begins and ends in distinct dual vertices of the dual $x$-axis 
$X^*$, we have
\begin{equation}\label{eq:gsprop2}
\sum_{\langle x,y \rangle \in C^*} J_{xy}\alpha_x\alpha_y > 0 
\mbox{ and } \sum_{\langle x,y \rangle \in P^*} J_{xy}\alpha_x\alpha_y > 0 \ .
\end{equation}
It is easy to see that with our choice of boundary conditions, this is equivalent to the more familiar characterization that $\alpha$ is an infinite-volume GSP if 
and only if
\begin{equation}\label{eq:gsprop}
\sum_{\langle x,y \rangle \in \partial S} J_{xy} \alpha_x\alpha_y > 0 \text { \ for every finite set of vertices $S$}\ .
\end{equation}
Here, $\partial S$ refers to all edges which have one endpoint in $S$ 
and one not in $S$. 

For any edge $\langle x,y \rangle$, we say that the coupling $J_{xy}$ 
is {\it satisfied} in $\alpha \in \tilde{\Sigma}_{H}$ if the 
inequality $J_{xy}\alpha_x\alpha_y > 0$ holds; otherwise we will call 
the coupling {\it unsatisfied}.
When we say that the coupling at a dual 
edge in $H^*$ is satisfied (unsatisfied), we mean that the coupling at 
the edge in the original lattice $H$ (the 
perpendicular bisector edge which is dual to this 
dual edge) is satisfied (unsatisfied).
The {\it interface} between configurations $\alpha$ and $\beta$ in  
$\tilde{\Sigma}_{H}$ is the set of dual edges (and 
their endpoints, which are dual vertices) whose couplings are satisfied
in exactly one of the two configurations.  The interface between $\alpha$ and $\beta$
will be denoted $\alpha \Delta \beta$ and we will call each connected 
component of it a {\it domain wall}.

\subsection{Main Results}\label{subsec:mainresults}

Recall the definition of the box $\Lambda_n$ from 
Section~\ref{subsec:preliminaries}.  Let $\mathcal{K}_n$ be the joint 
distribution of the couplings and the corresponding GSP on $\Lambda_n$, 
using the boundary conditions listed in Section~\ref{subsec:preliminaries} 
(periodic on the left and right sides of the box and free on the top
and bottom).  The first theorem below states that the measures 
$\mathcal{K}_n$ converge in the sense of finite-dimensional distributions.  
The second theorem states that the limiting measure $\mathcal{K}$ is supported on
a single GSP for almost every (a.e.) $J$.

\begin{theorem}\label{thm:maintheorem1}
The sequence of measures $(\mathcal{K}_n)$ converges as $n \to \infty$.
The limiting measure $\mathcal{K}$ is supported on infinite-volume GSP's;
in other words, for a.e.~coupling configuration $J$, the conditional
distribution $\mathcal{K}(~\cdot~|~ J)$ is supported on GSP's for
that $J$.
\end{theorem}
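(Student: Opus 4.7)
The plan is to prove both statements via compactness plus a continuity argument for the GSP characterization, with the main work going into upgrading subsequential convergence to full convergence.

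For tightness and subsequential limits: I would extend each $\alpha_{n,J}$ to a configuration on all of $H$ arbitrarily (say by setting undefined spins to $+1$), so that $\mathcal{K}_n$ becomes a probability measure on the Polish space $\RR^E \times \tilde{\Sigma}_H$ with the product topology. Since $\tilde{\Sigma}_H$ is compact and the $J$-marginal of every $\mathcal{K}_n$ is the fixed measure $\nu$ (a single Borel measure on $\RR^E$, hence inner regular), the family $(\mathcal{K}_n)$ is tight and weak subsequential limits exist by Prokhorov.

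For support on GSPs of any subsequential limit $\mathcal{K}$, I would exploit the circuit/path characterization \eqref{eq:gsprop2}. Fix a dual circuit $C^*$ in $H^*$; for all $n$ large enough that $C^* \subset \Lambda_n$, the finite-volume minimality of $\alpha_n$ forces
\[ S_{C^*}(J,\alpha_n) \;:=\; \sum_{\langle x,y\rangle \in C^*} J_{xy}\alpha_{n,x}\alpha_{n,y} \;>\; 0. \]
Since $S_{C^*}$ depends continuously on finitely many coordinates of $(J,\alpha)$, weak convergence along the chosen subsequence yields $\mathcal{K}(S_{C^*} \geq 0)=1$, and absolute continuity of $\nu$ rules out $\{S_{C^*}=0\}$, so $\mathcal{K}(S_{C^*}>0)=1$. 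The same reasoning applies to each dual path $P^*$ from $X^*$ to $X^*$. Since there are only countably many such circuits and paths, a single $\mathcal{K}$-null set handles all of them simultaneously, proving $\mathcal{K}$-almost sure membership in the set of infinite-volume GSPs.

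For convergence without subsequences, soft compactness is not enough: one must exclude two genuinely distinct subsequential limits $\mathcal{K},\mathcal{K}'$. My approach would be to show that for every finite edge set $F$, the distribution of $\alpha_n$ restricted to $F$ stabilizes, by controlling the interface $\alpha_m \Delta \alpha_n$ near $F$ for $m<n$. The ingredients I would try to combine are horizontal translation invariance (inherited from the periodic BCs), rigidity of ground-state interfaces coming from the free top boundary and the dual axis $X^*$ in $H^*$, and the metastate formalism promised in Section~\ref{sec:metastates}. The principal obstacle is exactly this last step: tightness and the GSP support property are standard passage-to-the-limit arguments, but uniqueness of the subsequential limit appears to require genuinely using the special geometry of the half-plane (most likely through a lemma saying that the associated metastate is trivial), which is where one expects the heart of the paper's technical contribution to lie.
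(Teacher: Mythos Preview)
Your tightness and GSP-support arguments are correct and essentially match the paper's treatment (the paper just says the GSP property ``is clearly preserved''; your version with the continuous functional $S_{C^*}$ is a fine way to make that precise). The gap is exactly where you say it is: you have not proved convergence of $(\mathcal{K}_n)$, only diagnosed that it reduces to showing all subsequential limits agree, i.e.\ that the metastate is trivial. That is the entire content of the theorem beyond soft analysis, so as a proof of Theorem~\ref{thm:maintheorem1} your proposal is incomplete.

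It is worth noting that your hinted route---controlling the \emph{finite-volume} interfaces $\alpha_m\Delta\alpha_n$ directly to get stabilization on each finite $F$---is not what the paper does, and it is unclear how one would make it work. The paper instead passes immediately to infinite volume: take any two subsequential limits, condition on $J$ to get metastates $\mathcal{K}_J,\mathcal{K}'_J$, and sample independent GSP's $\alpha,\beta$. The core result (Theorem~\ref{thm:maintheorem3}) is that $\alpha=\beta$ almost surely, which forces all subsequential limits to coincide. The proof of \emph{that} uses machinery you did not anticipate: the \emph{excitation metastate} (tracking constrained minimizers and their energy differences) to show that if $\alpha\Delta\beta\neq\varnothing$ with positive probability then there are infinitely many \emph{tethered} domain walls touching the dual $x$-axis with positive density at every height; then a vertical averaging of the half-plane measure to produce a fully translation-invariant measure $\mu^*$ on pairs of GSP's in $\ZZ^2$; and finally an appeal to the Newman--Stein theorem that such a $\mu^*$ can support at most one domain wall, giving the contradiction. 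So the ``special geometry of the half-plane'' you allude to enters specifically through the free bottom boundary (which makes tethered walls disjoint and countable) combined with an embedding into the full-plane uniqueness result.
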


\begin{theorem}\label{thm:maintheorem2}
The limiting measure $\mathcal{K}$ has the property that for a.e.~$J$, 
the conditional distribution $\mathcal{K}(~\cdot~|~ J)$ 
is supported on only a single GSP.
\end{theorem}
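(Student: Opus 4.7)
The strategy is to argue by contradiction. Suppose that on a set of $\nu$-positive measure the conditional measure $\kappa_J := \mathcal{K}(\,\cdot\,\mid J)$ is supported on at least two GSP's. Because the periodic horizontal boundary conditions preserve horizontal integer translations, every $\mathcal{K}_n$ is horizontal-translation-invariant, so the limit $\mathcal{K}$ is as well; horizontal ergodicity of $\nu$ then upgrades the positive-measure assumption to an almost-sure one. On this full-measure event I would draw two independent samples $\alpha,\beta$ from $\kappa_J$ and restrict to the positive-probability event $\{\alpha\neq\beta\}$, where the interface $I := \alpha\,\Delta\,\beta \subset H^*$ is non-empty. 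The goal is to show that the existence of such an $I$ contradicts the GSP criterion \eqref{eq:gsprop2}.

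The key observation on $I$ is that $J_{xy}\alpha_x\alpha_y = -J_{xy}\beta_x\beta_y$ on every edge of the interface, since each such coupling is satisfied by exactly one of the two configurations. I would first rule out every finite connected component of $I$. If some component contains a dual circuit $C^*$, then applying \eqref{eq:gsprop2} first to $\alpha$ and then to $\beta$ would force $\sum_{C^*} J_{xy}\alpha_x\alpha_y > 0$ and $\sum_{C^*} J_{xy}\beta_x\beta_y > 0$ simultaneously, which is impossible because the two sums are negatives of each other; the second clause of \eqref{eq:gsprop2} analogously excludes any finite dual path in $I$ joining two distinct vertices of the dual $x$-axis $X^*$. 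After resolving the ambiguity at $I$-vertices of degree four by pairing crossings into simple sub-paths and sub-circuits, this leaves every connected component of $I$ unbounded.

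The main obstacle is to rule out unbounded components. Such a component either avoids $X^*$ entirely or touches it. In the first case it contains a doubly-infinite dual path in the interior of $H^*$; I would appeal to the two-dimensional domain-wall structure results of Newman-Stein cited in the introduction, which restrict the asymptotic directions such a wall may take, and combine these with the half-plane geometry (the free boundary on $X^*$) to force a suitable horizontal translate of such a path to enclose a finite region, producing a finite circuit in $I$ and contradicting the previous paragraph. In the second case, horizontal translation invariance makes the density of $X^*$-endpoints of $I$ a positive constant on a translation-invariant event, and a Burton--Keane-style pairing argument, together with the continuity of the coupling distribution to prevent exact cancellations of signed sums, should extract with positive probability a finite dual sub-path of $I$ joining two distinct points of $X^*$ --- again contradicting the previous paragraph. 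Converting the geometric information about unbounded domain walls into one of the two types of finite sub-object forbidden by step two, rather than the easier finite-component exclusion, is where I expect the bulk of the technical work to lie.
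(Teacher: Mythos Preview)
Your proposal correctly identifies the easy part: finite components of the interface are ruled out by applying \eqref{eq:gsprop2} to both $\alpha$ and $\beta$ and noting the sums are negatives of each other. But your treatment of unbounded components contains a genuine gap, and this is precisely where the real content of the theorem lies.

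Consider your second case: an unbounded component touching $X^*$. Your own finite-exclusion argument shows that such a component touches $X^*$ at \emph{exactly one} dual vertex (otherwise it would contain a forbidden $X^*$-to-$X^*$ path). So what you actually have is a collection of singly-infinite ``tethered'' domain walls, each anchored at a single point of $X^*$ and escaping to infinity. Horizontal translation invariance does give a positive density of such anchor points---but this is not a contradiction; it is exactly the picture one must confront. A Burton--Keane argument cannot manufacture a finite $X^*$-to-$X^*$ sub-path here, because the walls are genuinely disjoint and each uses only one boundary vertex. There is no encounter-point or trifurcation structure to exploit.

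Your first case is equally problematic: the Newman--Stein domain-wall results you invoke are proved for \emph{full-plane} translation-invariant measures, and the half-plane measure $\mathcal{K}$ is only horizontally translation-invariant. You cannot apply those results directly in $H$, nor does any simple translate of a doubly-infinite wall in $H^*$ automatically enclose a finite region.

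The paper's route is essentially different. It does not try to rule out infinite domain walls in the half-plane at all. Instead it (i) uses an \emph{excitation metastate} to show that a nonempty interface forces, with positive probability, tethered domain walls of positive density at \emph{every} height $k$; (ii) averages vertical translates of the half-plane measure to build a genuinely translation-invariant measure $\mu^*$ on the full plane $\ZZ^2$; (iii) shows that the uniform-in-$k$ density estimate survives the limit to give at least two domain walls under $\mu^*$; and (iv) only then invokes the Newman--Stein uniqueness theorem, now legitimately, in the full plane. The restoration of vertical translation invariance via averaging, together with the density estimate uniform in height (which in turn needs the excitation-metastate machinery), is the missing idea in your outline.
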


Our analysis will be based on the
concept of metastate, developed in
\cite{AW90,NS96b,N97,NS97,NSBerlin,NS98}, and the {\it excitation
metastate}, introduced in \cite{NS01}. Theorem \ref{thm:maintheorem3}
below
is stated in this framework and easily yields
the above main results.

\section{Metastates and Outline of the Proof}
\label{sec:metastates}

\subsection{Metastates on Ground States}

There are different, mathematically equivalent, ways to define the
metastate; the one that will be most convenient for our purposes is the first
version, using joint distributions, due to Aizenman and
Wehr~\cite{AW90}. The metastate concept provides both an appropriate setting for infinite-volume Gibbs states 
(or in our context, ground state pairs) for disordered systems and a useful tool
for mathematical analysis.  We present the definitions for boundary conditions such as periodic or free, where ground state spin configurations are considered modulo a global spinflip,
though more general cases are similar. The treatment for $\ZZ^d$ is the
same as for $H$.

For any $\Lambda\subset H$ finite and fixed $\alpha\in\tilde{\Sigma}_\Lambda$ 
we define a probability measure $\delta_\alpha$ supported on $\alpha$: for any $\alpha'\in\tilde{\Sigma}_{\Lambda}$
$$
\delta_{\alpha}(\alpha'):= 
\prod_{\substack{\langle x,y \rangle\\ 
x,y\in \Lambda}}1_{\alpha_x\alpha_y}(\alpha'_x\alpha'_y) \ ,
$$
where $1_{\alpha_x\alpha_y}(\alpha'_x\alpha'_y)=1$ if 
$\alpha_x\alpha_y=\alpha'_x\alpha'_y$ and $0$ otherwise.

For the sequence $\Lambda_n$ with $\Lambda_n\to H$ we recall that $E_n$ 
stands for the set of edges of $\Lambda_n$ and $\alpha_{n,J}$, for the unique 
GSP on $\Lambda_n$. We consider the joint distribution on 
$\tilde{\Sigma}_{\Lambda} \times \RR^{E_n}$ of the couplings together 
with the measure supported on $\alpha_{n,J}$:
\begin{equation}\label{eq:findimkappa}
\mathcal{K}_n:= \delta_{\alpha_{n,J} }\ \nu_n(d J)\ ,
\end{equation}
where $\nu_n$ is the i.i.d.~product measure for the couplings in $\Lambda_n$.
A standard compactness argument leads to the existence along subsequences
of a limiting measure $\mathcal{K}$ in the sense of finite-dimensional
distributions. More precisely, 
for every subsequence, there exists a subsubsequence $n_k$ such that for
any finite $\Lambda\subset H$, $m < \infty$ and $A$ any measurable event in
$\tilde{\Sigma}_{\Lambda} \times \RR^{E_m}$,
$$ \mathcal{K}(A)=\lim_{k\to\infty}\mathcal{K}_{n_k}(A)\ .$$
The reader is referred to Lemma B.1 in \cite{N97} for more details.
Moreover, by construction, the conditional of $\mathcal{K}$ 
given $J$ is supported 
on GSP's for that $J$ since the property \eqref{eq:gsprop} is clearly preserved. 
Since the space  $\{-1,+1\}^H$ is Polish, the distribution conditioned
on the couplings exists for $\nu$-almost all $J$, yielding the 
following definition
of the metastate.

\begin{definition}[Metastate for Ground States]
A metastate $\mathcal{K}_J$ is
a probability measure on  $\tilde{\Sigma}_{H}$ obtained by conditioning
a limit $\mathcal{K}$ of finite-volume measures \eqref{eq:findimkappa}
on the realization $J$ of the couplings. In particular, for almost every $J$, it is supported
on GSP's for that $J$.
\end{definition}

Another construction of metastates, 
referred to as {\it empirical metastates},
consists of taking subsequential limits of the empirical measures
$ \frac{1}{N}\sum_{n=1}^N\delta_{\alpha_{n,J}}$.
It can be shown that there exist subsequences for which both constructions
agree (more accurately, $\alpha_{n,J}$ may need to be replaced by
$\alpha_{m_n,J}$ where $m_n$ is increasing with $n$;
see Appendix B of \cite{N97}). 
We can now present a precise version of our main 
result from which Theorems \ref{thm:maintheorem1} and 
\ref{thm:maintheorem2} follow.
\begin{theorem}
\label{thm:maintheorem3}
Let $\alpha$ and $\beta$ be two GSP's sampled independently from 
metastates $\mathcal{K}_J$ and $\mathcal{K}'_J$ for the same realization 
$J$ of the couplings. Then, for $\nu$-almost all $J$, $\alpha = \beta$ with $\mathcal{K}_J\times\mathcal{K}'_J$-probability one.  Thus for almost every $J$ there exists a unique metastate 
$\mathcal{K}_J$ and it is supported on a single GSP.
\end{theorem}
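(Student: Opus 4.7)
I would sample $\alpha\sim\mathcal{K}_J$ and $\beta\sim\mathcal{K}'_J$ independently for the same $J$, and show that the interface $\alpha\Delta\beta\subset H^*$ is empty for $\nu$-a.e.\ $J$. The argument splits into two parts: first, use the infinite-volume GSP characterization (\ref{eq:gsprop2}) to constrain the possible geometry of the domain walls (connected components of $\alpha\Delta\beta$); second, use horizontal translation invariance of the joint law of $(J,\alpha,\beta)$ to rule out the remaining configurations.

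\textbf{Step 1 (geometry of walls).} On every dual edge $\langle x,y\rangle$ of a domain wall $D$, by definition of the interface $J_{xy}\alpha_x\alpha_y=-J_{xy}\beta_x\beta_y$. Applying (\ref{eq:gsprop2}) to $\alpha$ and to $\beta$ along any dual circuit $C^*\subset D$ gives two strictly positive sums that differ only in sign, a contradiction; the same reasoning applied to a dual path $P^*\subset D$ with endpoints in two distinct vertices of $X^*$ yields the same contradiction. Hence no domain wall contains a dual circuit, and no domain wall meets $X^*$ at more than one point. Combining this with the planar structure of $H^*$ (where interface edges locally form a perfect pairing at each dual vertex), every connected component of $\alpha\Delta\beta$ must be an infinite (possibly bi-infinite) dual path whose ends not on $X^*$ escape to vertical infinity.

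\textbf{Step 2 (translation invariance and swapping).} The horizontal periodic boundary makes each $\mathcal{K}_n$ invariant under the finite shift group of the cylinder, so the limit $\mathcal{K}$ and hence the metastate $\mathcal{K}_J$ are invariant in joint distribution under horizontal $\ZZ$-shifts; independent sampling of $\alpha$ and $\beta$ preserves this invariance for $(J,\alpha,\beta)$. Suppose for contradiction that $\PP(\alpha\Delta\beta\neq\emptyset)>0$. Then by ergodicity along the horizontal shift, with positive probability there is a positive density of infinite domain walls crossing any fixed vertical strip. Picking two horizontally consecutive infinite walls $W_1,W_2$ and swapping $\alpha$ with $\beta$ in the slab they bound, I would produce a third configuration $\gamma$. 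Because the slab's boundary consists only of interface edges, every flux $\sum_{\partial S}J_{xy}\gamma_x\gamma_y$ through a finite $S$ reduces to a flux for $\alpha$ or for $\beta$; hence $\gamma$ satisfies (\ref{eq:gsprop}) and is itself a GSP distinct from both, which (once $\gamma$ is realized as a limit of finite-volume GSPs) contradicts the a.s.\ uniqueness of $\alpha_{n,J}$.

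\textbf{Main obstacle.} I expect the last step above to be the real difficulty. The GSP characterizations (\ref{eq:gsprop2}) and (\ref{eq:gsprop}) alone do not force $\alpha=\beta$, because infinite dual paths are not boundaries of finite sets and so slip past both conditions. What has to be used beyond the GSP property is that $\alpha$ and $\beta$ are both metastate samples, i.e.\ limits of \emph{finite-volume} GSPs with the \emph{same} boundary condition (periodic horizontally, free vertically). The free bottom boundary along $X^*$ should be essential: it allows the truncation of an infinite wall by closing it off through the dual $x$-axis, turning it into a finite modification of a finite-volume GSP that can then be excluded by uniqueness. I would expect the excitation-metastate technology of Newman and Stein, adapted from $\ZZ^2$ to the half-plane, to be the natural framework for making the swap-and-truncate argument rigorous.
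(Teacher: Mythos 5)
Your Step 1 is sound and matches facts the paper also uses (no domain wall contains a dual circuit, and no wall meets $X^*$ in two points, by \eqref{eq:gsprop2}). The gap is in Step 2, and it is a real one, not a technicality. First, the swapped configuration $\gamma$ does not obviously satisfy \eqref{eq:gsprop}: for a finite $S$ straddling one of the bounding walls $W_1$, the edges of $\partial S$ that cross $W_1$ contribute $J_{xy}\gamma_x\gamma_y$ with $\gamma_x$ taken from $\alpha$ and $\gamma_y$ from (a globally re-signed copy of) $\beta$, so the flux through $\partial S$ is \emph{not} a flux for $\alpha$ or for $\beta$ alone --- it differs by twice the energy of the wall edges inside $\partial S$, with a sign you do not control. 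Second, and more fundamentally, even if $\gamma$ were a bona fide infinite-volume GSP, that contradicts nothing: the a.s.\ uniqueness of the \emph{finite-volume} GSP $\alpha_{n,J}$ places no bound on the number of infinite-volume GSPs, and $\gamma$ has not been exhibited as a sample from any metastate. You correctly sense this in your ``Main obstacle'' paragraph, but the obstacle is the theorem; no mechanism is offered to overcome it.

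The paper's route is genuinely different and does not construct a third GSP. It first uses the excitation metastate and coupling-modification arguments (Propositions~\ref{prop:crit}--\ref{prop:2bond_excit} and Corollary~\ref{cor:dw}) to show that if $\alpha\Delta\beta\neq\emptyset$ with positive probability then the interface can be forced through \emph{any} prescribed edge --- in particular through edges on the dual $x$-axis --- so that \emph{tethered} domain walls exist and, by horizontal ergodicity, occur with positive linear density \emph{uniformly in the height $k$} (Propositions~\ref{prop:tethdensity} and~\ref{prop:tethdensityest}; the uniformity in $k$ comes from the fact that a wall tethered to $X^*$ must cross every horizontal line between $X^*$ and its highest point, up to a boundary correction of order $k$). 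It then averages the half-plane measure over vertical shifts to produce a fully translation-invariant measure $\mu^*$ on pairs of excitation systems for all of $\ZZ^2$, under which the interface contains at least two domain walls with positive probability (Proposition~\ref{prop:multipledws}). The contradiction comes from the generalized Newman--Stein theorem (Theorem~\ref{thm:NSgeneral}), which forbids more than one domain wall for such full-plane translation-invariant measures. So the free boundary at $X^*$ is used not to ``truncate'' walls but to tether them, which is what makes the wall density uniform in height and hence survives the vertical averaging. Without Corollary~\ref{cor:dw} (the coupling-modification step) and the passage to the full plane, your translation-invariance argument alone cannot close the proof.
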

The theorem implies the existence of the limit $\mathcal{K}$ of 
finite-volume measures of the form \eqref{eq:findimkappa} since it shows
that every convergent subsequence has the same limit.

\subsection{Outline of Proof}
The proof of Theorem \ref{thm:maintheorem3} consists of two main
parts. First we show that if there are two distinct GSP's 
obtained from metastates
for the same
$J$, then their interface contains (with positive probability)
infinitely many domain walls.  We focus on the density properties of
{\it tethered} domain walls, those that intersect the dual $x$-axis. 
The proof of the existence of tethered domain walls requires
an extension of the metastate that includes excited states. 
These measures are discussed in Section \ref{subsec:excitation}.

Second, we construct from two (possibly different) metastates for the half-plane $H$
a measure $\mu^*$ on pairs of GSP's for the full plane.
Under the assumption that there is more than
one GSP for the half-plane metastates, we show that with positive probability, two
ground states sampled independently from $\mu^*$ have an
interface which contains at least two distinct domain walls. This would lead to a
contradiction by virtue of a theorem of \cite{NS00,NS01} which prohibits
the existence of more than one domain wall in the full plane. The theorem as stated in \cite{NS00,NS01} does not apply to the measure $\mu^*$,
since $\mu^*$ is not a product of metastates constructed from finite-volume GSP's for the full plane. 
However, we will give in Section~\ref{subsec:plane} a more general version of that theorem 
which makes evident that the measure $\mu^*$ satisfies all the 
necessary properties for the original proof of \cite{NS00,NS01} to hold.

\subsection{Excitation Metastates}\label{subsec:excitation}

The excitation metastate is a probability measure on configurations of
minimal energy where spins in some finite subset of $H$ are specified. It
includes the metastate constructed previously as a marginal. 
The measure we will use here is slightly different from the original definition in \cite{NS01}, 
although it contains essentially the same information.  
The main purpose of the excitation metastate is to express sufficient conditions for
an infinite-volume state to become a GSP when finitely many couplings are modified
as well as sufficient conditions for a GSP to lose this property
(cf. Propositions \ref{prop:crit}, \ref{prop:supersatisfy} and \ref{prop:2bond_excit}).
We will apply this framework to prove Corollary \ref{cor:dw}, a general result on ground state interfaces which proves the existence of tethered domain walls.

Let $A \subset H$ be finite and $\eta_A \in \tilde{\Sigma}_{A}$.  
We suppose that $n$ is large enough so that $A \subset \Lambda_n$.  
For a given coupling configuration $J$, we define the {\it excited state} $\alpha_{n,J}^{\eta_A}$ 
as the element of $\tilde{\Sigma}_{\Lambda_n}$ which minimizes the Hamiltonian ${\cal H}_n$ 
subject to the constraint that it equals $\eta_A$ on the set $A$.  
For any two pairs $\eta_A$ and $\eta_A'$, we define the energy difference
\begin{equation}\label{eq:energydiff}
\Delta {\cal E}_{n,J}(\eta_A, \eta_A') := {\cal H}_n (\alpha_{n,J}^{\eta_A}) - {\cal H}_n (\alpha_{n,J}^{\eta_A'})\ . 
\end{equation}
Clearly, the configuration $\eta_A$ for which $\alpha_{n,J}^{\eta_A}$ is the ground state in $\Lambda_n$ is determined
by these energy differences. More generally, for any subset $B\subset A$ and a given configuration $\eta_B$
on $B$, the state of minimal energy among the states $\alpha_{n,J}^{\eta_A}$ with configuration $\eta_B$ on $B$ is determined
by these energy differences. For example, finding the ground state corresponds to the case $B=\emptyset$. 
Similar quantities were introduced in \cite{NS01}, 
but the energy differences there were defined between an excited state and the ground state.
We now use two excited states in the definition since the energy difference has then a 
natural decomposition as we shall see.  

Let $J_A$ be $J$ restricted to couplings with both endpoints in $A$.
We denote by ${\cal H}_{J_A}(\cdot)$ the Hamiltonian like \eqref{eq:energydef} but with the sum restricted to $x,y \in A$.  
We write the energy difference for the couplings with at most one endpoint in $A$
$$
\Delta {\cal E}_{n,J}^{ext}(\eta_A, \eta_A') = \left[ {\cal H}_n (\alpha_{n,J}^{\eta_A}) - {\cal H}_{J_A}(\eta_A) \right] - \left[ {\cal H}_n (\alpha_{n,J}^{\eta_A'}) - {\cal H}_{J_A}(\eta_A') \right] \ .
$$
Letting $h(\eta_A, \eta_A', J_A) = {\cal H}_{J_A}(\eta_A) - {\cal H}_{J_A}(\eta_A')$, we can rewrite the difference \eqref{eq:energydiff} as
\begin{equation*}\label{finvolrealtion}
\Delta {\cal E}_{n,J}(\eta_A,\eta_A') = \Delta {\cal E}_{n,J}^{ext}(\eta_A,\eta_A') +  h(\eta_A, \eta_A', J_A) \ .
\end{equation*}
This decomposes the energy difference $\Delta {\cal E}_{n,J}$ into two pieces: 
the {\it exterior} energy difference and an {\it interior} term which depends only on variables inside of $A$. 
We can interpret the exterior energy difference as a sort of boundary condition.
Once this term is known for all $\eta_A$'s and $\eta_A'$'s, the energy differences are determined for every $J_A$
through the functions $h$. Hence so are all excited states and excitation energies for subsets $B\subset A$.
We now highlight four important properties following directly from the definitions. 
\begin{enumerate}
\item For any $\eta_A$, $\alpha_{n,J}^{\eta_A}$ has the GSP property \eqref{eq:gsprop} in $\Lambda_n\backslash A$.
\item For any $\eta_A,\eta_A'$ and $\eta_A''$: $\Delta {\cal E}_{n,J}^{ext}(\eta_A,\eta_A') + \Delta {\cal E}_{n,J}^{ext}(\eta_A',\eta_A'')=\Delta {\cal E}_{n,J}^{ext}(\eta_A,\eta_A'')$.
\item For any $\eta_A$ and $\eta_A'$, $\Delta {\cal E}_{n,J}^{ext}(\eta_A,\eta_A')$ and $\alpha_{n,J}^{\eta_A}$ do not depend on $J_A$.
\item For $B\subset A$ and $\eta_B\in \tilde{\Sigma}_{B}$, let $\eta_A^*$ be the (almost surely) unique element of $\tilde{\Sigma}_{A}$ such that: i) $\eta^*_A=\eta_B$ on $B$ and ii) $\Delta {\cal E}_{n,J}^{ext}(\eta_A,\eta^*_A) +  h(\eta_A,\eta^*_A ,J_A) \geq 0$ for all $\eta_A$ with $\eta_A=\eta_B$ on $B$. Also define ${\eta'}_A^{*}$ similarly with $\eta_B$ replaced by $\eta'_B$. Then,
\end{enumerate}
\begin{equation}\label{eq:prop4}
\alpha_{n,J}^{\eta_B} = \alpha_{n,J}^{\eta^*_A},~
\Delta \mathcal{E}_{n,J}^{ext}(\eta_B,\eta_B')=\Delta \mathcal{E}_{n,J}^{ext}(\eta_A^*,{\eta'}_A^{*})+ h(\eta^*_A, {\eta'}_A^{*}, J_A)-h(\eta_B, \eta_B', J_B)\ .
\end{equation}

Since the above finite-volume relations use the variables in a fixed set of vertices, 
they naturally extend to the infinite volume.  
We introduce the joint measure on excited states and excitation energies in all finite subsets:
\begin{equation}\label{eq:findimexcited}
\mathcal{K}_n^{\#}:=\Big(\prod_{\substack{A\subset\Lambda_n\\ \eta_A, \eta'_A\in\tilde{\Sigma}_{A}}}\delta_{(\alpha_{n,J}^{\eta_A},\Delta \mathcal{E}_{n,J}^{ext}(\eta_A,\eta_A'))}\ \Big)\ \nu_n(dJ)\ . 
\end{equation}
A compactness argument implies weak convergence along some subsequence
of the measures $\mathcal{K}^\#_n$ to an infinite-volume measure $\mathcal{K}^\#$ that may depend on the choice of the subsequence. This leads to the definition of an excitation metastate for almost all $J$ through conditioning.

\begin{definition}[Excitation Metastate]
An excitation metastate $\mathcal{K}^{\#}_J$  for a realization $J$ of the couplings is a
joint distribution on the collection 
$\left(\alpha_J^{\eta_A},\Delta \mathcal{E}_J^{ext}(\eta_A,\eta_A') \right)_{\substack{A\subset H \text{ finite}\\ \eta_A, \eta_A' \in\tilde{\Sigma}_{A}}}$ 
, where $\alpha_J^{\eta_A}\in\tilde{\Sigma}_{H}$ and $\Delta \mathcal{E}_J^{ext}(\eta_A,\eta_A') \in \RR$, obtained 
by conditioning on $J$ a limit $\mathcal{K}^{\#}$ of finite-volume measures of the form \eqref{eq:findimexcited}. 
The index $\#$ may be thought of as running over all the finite subsets $A$.
\end{definition}

We write ${\cal K}_J^A$ for the marginal of ${\cal K}_J^{\#}$ on exterior excitation energies and excited states for the set $A$.  
We stress that ${\cal K}_J^A$ for $A=\emptyset$ (or $A$ a singleton site) is simply the metastate on ground states.
It is easily checked that the set of measures satisfying the above four properties
is closed under taking convex combination and limits. In particular,
the infinite-volume measure ${\cal K}^\#_J$ satisfies analogous properties.
\begin{lemma}\label{lem:indep}
Let $\mathcal{K}^{\#}_J$ be an excitation metastate on $\left(\alpha_J^{\eta_A},\Delta \mathcal{E}_J^{ext}(\eta_A,\eta_A') \right)_{\substack{A\subset H \text{ finite}\\ \eta_A, \eta_A' \in\tilde{\Sigma}_{A}}}$. For any finite set $A$,
\begin{enumerate}
\item with $\mathcal{K}_J^\#$-probability one, $\alpha_J^{\eta_A}$ is a GSP on $H\backslash A$ for any $\eta_A$;
\item with $\mathcal{K}_J^\#$-probability one,
$\Delta {\cal E}_{J}^{ext}(\eta_A,\eta_A') + \Delta {\cal E}_{J}^{ext}(\eta_A',\eta_A'')=\Delta {\cal E}_{n,J}^{ext}(\eta_A,\eta_A'')$ for any $\eta_A$, $\eta_A'$ and $\eta_A''$;
\item ${\cal K}^A_J$ does not depend on $J_A$;
\item if $B\subset A$, then with $\mathcal{K}_J^\#$-probability one, the variables $(\alpha_J^{\eta_A},\Delta\mathcal{E}_J^{ext}(\eta_A,\eta_A'))_{\eta_A,\eta_A'}$
and  $(\alpha_J^{\eta_B},\Delta\mathcal{E}_J^{ext}(\eta_B,\eta_B'))_{\eta_B,\eta_B'}$ satisfy the equivalent of \eqref{eq:prop4} with the subscript
$n$ removed.
\end{enumerate}
\end{lemma}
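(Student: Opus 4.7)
My plan is to verify each of the four assertions at finite volume for $\mathcal{K}^{\#}_n$---where they hold by construction as items~1--4 of the discussion preceding \eqref{eq:prop4}---and then transfer each property first to any weak subsequential limit $\mathcal{K}^{\#}$ and then to the regular conditional $\mathcal{K}^{\#}_J$ for $\nu$-a.e.~$J$. The passage from $\mathcal{K}^{\#}$ to $\mathcal{K}^{\#}_J$ is automatic: any event of full $\mathcal{K}^{\#}$-measure has full $\mathcal{K}^{\#}_J$-measure for $\nu$-a.e.~$J$, so the real work is to check each assertion $\mathcal{K}^{\#}$-a.s. Throughout I fix a single subsequence along which the relevant finite-dimensional distributions converge, as in the definition of $\mathcal{K}^{\#}$.

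For assertions (2) and (4), I would observe that these are pointwise identities among finitely many coordinates of the vector $(\alpha^{\eta_A}, \Delta\mathcal{E}^{ext}(\eta_A,\eta_A'))$. For each fixed tuple $(A,\eta_A,\eta_A',\eta_A'')$, respectively $(B\subset A, \eta_B, \eta_B')$, the required equation holds $\mathcal{K}^{\#}_n$-a.s.\ by the finite-volume discussion, and it cuts out a closed set in the relevant finite-dimensional projection; so convergence of finite-dimensional distributions preserves it. Only countably many such tuples arise, so a single null set handles all of them simultaneously. For assertion (3), at finite volume $(\alpha_{n,J}^{\eta_A}, \Delta\mathcal{E}_{n,J}^{ext}(\eta_A,\eta_A'))$ depends only on couplings with at most one endpoint in $A$, and so under $\mathcal{K}^{\#}_n$ the marginal $\mathcal{K}_n^A$ factors as (a measure on these variables and on $J_{H\setminus A}$) times the law $\nu|_{J_A}$. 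This product structure is a statement about finite-dimensional marginals, preserved under weak convergence, and it survives disintegration against $J$.

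The one step that genuinely requires care is assertion~(1), because the GSP condition \eqref{eq:gsprop} is a strict inequality and hence not a closed event. My plan is a two-step approximation. For each fixed finite $S\subset H\setminus A$ and each $\eta_A$, the weak inequality $\sum_{\langle x,y\rangle\in \partial S} J_{xy}\alpha_x^{\eta_A}\alpha_y^{\eta_A} \geq 0$ is a closed condition on finitely many coordinates, so it passes to the limit by finite-dimensional convergence. On the other hand, the boundary event $\sum = 0$ forces a nontrivial linear relation among i.i.d.\ couplings with continuous marginal, hence has $\nu$-measure zero; since $\nu$ is the $J$-marginal of $\mathcal{K}^{\#}$, it also has $\mathcal{K}^{\#}$-measure zero. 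Combining the two gives the strict inequality $\mathcal{K}^{\#}$-a.s., and a countable intersection over all finite $S$, finite $A$ and $\eta_A \in \tilde{\Sigma}_A$ yields (1) with probability one. This upgrade from weak to strict inequality is the sole technical point in the proof; everything else is essentially bookkeeping inherited from the finite-volume construction.
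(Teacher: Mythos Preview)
Your proposal is correct and follows precisely the route the paper itself indicates but does not spell out: the paper simply asserts, just before the lemma, that ``the set of measures satisfying the above four properties is closed under taking convex combination and limits,'' and you have supplied the details of that verification.

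One small refinement worth noting: your treatment of item~(4) as cutting out ``a closed set in the relevant finite-dimensional projection'' glosses over the fact that the definition of $\eta_A^*$ in~\eqref{eq:prop4} involves \emph{strict} inequalities (the minimizer is required to be almost surely unique), so the event as stated is not literally closed. The remedy is exactly the one you already give for item~(1): for each fixed candidate $\eta_A$ extending $\eta_B$, the event ``$\eta_A$ is a non-strict minimizer and the identities in~\eqref{eq:prop4} hold with this $\eta_A$ in the role of $\eta_A^*$'' is closed in finitely many coordinates; the finite union over candidates has full $\mathcal{K}_n^{\#}$-measure, hence full $\mathcal{K}^{\#}$-measure by finite-dimensional convergence; and ties among candidates are $\nu$-null because $h(\eta_A,\eta_A',J_A)$ is a nonconstant linear function of the continuously distributed couplings in $J_A$. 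With that adjustment your argument is complete and matches the paper's intended (but unwritten) proof.
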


Using the above properties of the excitation metastate, we can study the GSP appearing in the metastate
as a function of finitely many couplings.  The basic procedure we use is as follows.  
Letting $A$ be a finite subset of $H$, we first sample exterior excitation energies and excited states with ${\cal K}_J^A$. 
Property {\it 3} guarantees that this can be done independently of the couplings in $A$; therefore, we think of it as an ``exterior'' realization.
We then use property {\it 4} to determine the GSP as a function of this realization and the ``interior'' realization -- the couplings inside $A$.
We start with the case where $A=\{x,y\}$ consists of the endpoints of an edge $b=\langle x,y \rangle$; in this setting we will use $A$ and $b$ interchangeably. 
There are then two possible $\eta_b$'s (up to a joint spinflip): $(+1,+1)$ and $(-1,+1)$. 
To lighten notation we write $\eta_b=+_b$ when the spins at $x$ and $y$ have the same sign and $\eta_b=-_b$ when they are opposite. 
Keeping track of $b$ will be helpful when dealing with more than one edge.
The corresponding pairs of configurations sampled from the excitation metastate will respectively be denoted by
$\alpha_J^{+_b}$ and  $\alpha_J^{-_b}$.

\begin{proposition}\label{prop:crit}
Let $b=\langle x,y \rangle$ be an edge and $(\alpha_J^{\eta_b},\Delta \mathcal{E}_J^{ext}(\eta_b,\eta_b'))_{\eta_b, \eta_b'=\pm_b}$ be sampled from the excitation metastate $\mathcal{K}_J^b$. 
There exists $C_J^b\in\RR$, independent of $J_{b}$, such that the GSP $\alpha_J$ is $\alpha_J^{+_b}$ for $J_b>C_J^b$ and is $\alpha_J^{-_b}$ for $J_{b}<C_J^b$. 
Precisely,
\begin{equation}\label{eq:critvalue}
C_J^b:= \frac{1}{2} \Delta \mathcal{E}_{J}^{ext}(+_b,-_b) \ . 
\end{equation}
\end{proposition}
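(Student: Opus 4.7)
The plan is to read off the GSP $\alpha_J$ from the excitation metastate marginal $\mathcal{K}_J^b$ by applying property (4) of Lemma \ref{lem:indep} with $B = \emptyset$ and $A = b$. That property says $\alpha_J = \alpha_J^{\eta_b^*}$, where $\eta_b^*$ is the a.s.\ unique element of $\tilde{\Sigma}_b = \{+_b, -_b\}$ satisfying
\[
\Delta \mathcal{E}_J^{ext}(\eta_b, \eta_b^*) + h(\eta_b, \eta_b^*, J_b) \geq 0 \quad \text{for every } \eta_b \in \tilde{\Sigma}_b.
\]
Because $\tilde{\Sigma}_b$ has only two elements and the inequality is trivial when $\eta_b = \eta_b^*$, selecting $\eta_b^*$ reduces to a single scalar comparison.

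The next step is to compute the interior energy explicitly. Since $\mathcal{H}_{J_b}(+_b) = -J_b$ and $\mathcal{H}_{J_b}(-_b) = +J_b$, one finds $h(+_b,-_b,J_b) = -2J_b$ and $h(-_b,+_b,J_b) = 2J_b$. Consequently $\eta_b^* = +_b$ iff
\[
\Delta \mathcal{E}_J^{ext}(-_b,+_b) + 2J_b \geq 0,
\]
and the cocycle relation in property (2), $\Delta \mathcal{E}_J^{ext}(-_b,+_b) = -\Delta \mathcal{E}_J^{ext}(+_b,-_b)$, turns this into $J_b \geq \tfrac{1}{2}\Delta \mathcal{E}_J^{ext}(+_b,-_b) =: C_J^b$. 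The symmetric calculation shows $\eta_b^* = -_b$ iff $J_b \leq C_J^b$. When $J_b > C_J^b$ (resp.\ $J_b < C_J^b$), the inequality above is strict, so $\eta_b^*$ is uniquely $+_b$ (resp.\ $-_b$), and hence $\alpha_J = \alpha_J^{+_b}$ (resp.\ $\alpha_J^{-_b}$). Independence of $C_J^b$ from $J_b$ is immediate from property (3): the exterior excitation energies appearing in the marginal $\mathcal{K}_J^b$ do not depend on $J_A = J_b$.

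The only delicate step, and what I would consider the main obstacle, is to justify that property (4) of Lemma \ref{lem:indep} may be applied at the level of the infinite-volume excitation metastate and not merely in the finite volume $\Lambda_n$. In $\Lambda_n$ the analogue $\alpha_{n,J} = \alpha_{n,J}^{\eta_b^*}$ is automatic from the definition of the ground state together with the decomposition $\Delta \mathcal{E}_{n,J} = \Delta \mathcal{E}_{n,J}^{ext} + h$ and the fact that $\tilde{\Sigma}_b$ has only two elements. Lemma \ref{lem:indep} already records that the four numbered properties survive the passage to weak subsequential limits of $\mathcal{K}_n^{\#}$ and the subsequent conditioning on $J$, so I would simply invoke it in the penultimate display above. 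Beyond that invocation, the entire argument is algebraic.
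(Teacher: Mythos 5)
Your proposal is correct and follows essentially the same route as the paper: apply property (4) of Lemma \ref{lem:indep} with $B=\varnothing$ and $A=\{x,y\}$, compute $h(+_b,-_b,J_b)=-2J_b$, and read off the threshold $C_J^b=\tfrac12\Delta\mathcal{E}_J^{ext}(+_b,-_b)$, with independence from $J_b$ coming from property (3). The only difference is that you make explicit the passage-to-the-limit justification and the cocycle identity $\Delta\mathcal{E}_J^{ext}(-_b,+_b)=-\Delta\mathcal{E}_J^{ext}(+_b,-_b)$, which the paper leaves implicit.
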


\begin{proof}
From Lemma~\ref{lem:indep}, $C_J^b$ is independent of $J_b$.  The same lemma allows us to determine $\alpha_J$ by taking $B = \varnothing$ and $A = \{x,y\}$ in \eqref{eq:prop4}.  The values of $h(+_b,+_b,J_b)$ and $h(-_b,-_b,J_b)$ are both 0 and we have $h(+_b,-_b,J_b) = -h(-_b,+_b,J_b) = -2J_b$.  This implies that $\eta_A^* = +_b$ for $J_b >  \frac{1}{2} \Delta \mathcal{E}_{J}^{ext}(+_b,-_b)$, and $\eta_A^* = -_b$ for $J_b <  \frac{1}{2} \Delta \mathcal{E}_{J}^{ext}(+_b,-_b)$, implying \eqref{eq:critvalue}.
\end{proof}

The statement of the proposition is illustrated in Figure \ref{fig:1bond_excit}. 
We say that $C_J^b$ is the {\it critical value} of the edge $b$. 
The {\it critical contour} is the set of dual edges $\alpha_J^{+_b}\Delta\alpha_J^{-_b}$ in $H^*$. 
This contour always goes through $b$ and might be infinite. 
When $J_{b}$ crosses $C_J^b$ from above (resp. below) and the GSP $\alpha_J$ changes, 
we say that it {\it flips} from $\alpha_J^{+_b}$ to $\alpha_J^{-_b}$ (resp. from $\alpha_J^{-_b}$ to $\alpha_J^{+_b}$). 
We stress that even though the GSP flips when the critical value is crossed,
 the former minimizing GSP can possibly retain the ground state property \eqref{eq:gsprop}. 
Hence $\alpha_J^{+_b}$ and $\alpha_J^{-_b}$ might simultaneously be GSP's for $J$. 
The following gives sufficient conditions on $J_b$ for only one of them to be a GSP.

\begin{proposition}\label{prop:supersatisfy}
If 
\begin{equation}\label{eq:supersatisfy}
|J_{xy}|>\min \left\{\sum_{z:\langle z, x\rangle\in E,z \neq y}|J_{xz}|, \sum_{z:\langle z,y \rangle \in E, z \neq x}|J_{yz}|\right\}\ , 
\end{equation}
then $\alpha_x\alpha_y=sgn \ J_{xy}$ for any GSP $\alpha$. 
In particular, in the notation of Proposition \ref{prop:crit}, exactly one of $\alpha_J^{+_b}$ and $\alpha_J^{-_b}$ is a GSP when $J_{b}$ satisfies \eqref{eq:supersatisfy}.
\end{proposition}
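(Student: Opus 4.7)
The plan is to reduce the first statement to the local GSP inequality \eqref{eq:gsprop} applied to a single site, and then deduce the second statement from the first together with Proposition~\ref{prop:crit}.

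For the first claim, I would argue by contradiction. Suppose $\alpha$ is a GSP with $J_{xy}\alpha_x\alpha_y < 0$, i.e.\ the bond $\langle x,y\rangle$ is unsatisfied. Without loss of generality assume the minimum in \eqref{eq:supersatisfy} is realized at $x$, so that
\[
|J_{xy}| > \sum_{z:\langle z,x\rangle\in E,\,z\neq y} |J_{xz}|.
\]
Apply the GSP criterion \eqref{eq:gsprop} to the singleton $S=\{x\}$, whose edge boundary is exactly the set of edges incident to $x$. Splitting off the bond to $y$ and bounding the remaining terms trivially by their absolute values, we get
\[
\sum_{\langle x,z\rangle\in\partial\{x\}} J_{xz}\alpha_x\alpha_z
\;\le\; -|J_{xy}| + \sum_{z:\langle z,x\rangle\in E,\,z\neq y}|J_{xz}|
\;<\;0,
\]
contradicting \eqref{eq:gsprop}. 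Hence $J_{xy}\alpha_x\alpha_y>0$, i.e.\ $\alpha_x\alpha_y=\mathrm{sgn}\,J_{xy}$.

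For the second claim, observe that $\alpha_J^{+_b}$ and $\alpha_J^{-_b}$ have $\alpha_x\alpha_y=+1$ and $-1$ respectively, by construction. By Proposition~\ref{prop:crit}, the actual GSP $\alpha_J$ coincides with one of $\alpha_J^{\pm_b}$ depending on the sign of $J_b - C_J^b$, so at least one of the two is a GSP. By the first part of the proposition, any GSP must have $\alpha_x\alpha_y = \mathrm{sgn}\,J_b$, which rules out the other one. Hence exactly one of $\alpha_J^{+_b}$ and $\alpha_J^{-_b}$ is a GSP when \eqref{eq:supersatisfy} holds.

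The only step with any content is the choice of $S=\{x\}$ in the single-site criterion; once this is set up, the inequalities are immediate. The second part is purely bookkeeping given Proposition~\ref{prop:crit}. No obstacle worth noting is anticipated, beyond being careful that the ``$\min$'' in the hypothesis lets us pick the more favorable endpoint for the single-site flip argument.
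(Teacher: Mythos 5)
Your proof is correct and follows exactly the paper's argument: the paper likewise notes that if $\alpha_x\alpha_y \neq \mathrm{sgn}\,J_{xy}$ then \eqref{eq:gsprop} fails for $S=\{x\}$ or $S=\{y\}$, and the second claim follows from Proposition~\ref{prop:crit} as you describe. You have simply written out the single-site inequality and the bookkeeping in more detail than the paper's one-line proof.
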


\begin{proof}
Whenever $J_{xy}$ satisfies the inequality, we must have $\alpha_x\alpha_y=sgn \ J_{xy}$, otherwise \eqref{eq:gsprop} is violated either for $S=\{x\}$ or for $S=\{y\}$.
\end{proof}

When the condition \eqref{eq:supersatisfy} holds, we say that the coupling $J_{xy}$ is {\it super-satisfied}.  
The reader can verify that a dual edge whose coupling is super-satisfied cannot be in an interface between two GSP's.  
Furthermore, such an edge cannot be in the critical contour of another edge.

\begin{figure}[H]
\begin{center}
\includegraphics*[viewport= 1in 8.8in 12in 9.5in]{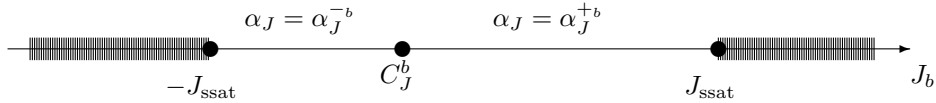}
\end{center}
\caption{The GSP $\alpha_J$ from the set $\{\alpha_J^{+_b},\alpha_J^{-_b}\}$ as a function of $J_b$. 
$J_{\text{ssat}}$ is the right-hand side of \eqref{eq:supersatisfy}. The shaded region is where $J_b$ is super-satisfied.}
\label{fig:1bond_excit}
\end{figure}

We shall later need the analogue of Proposition \ref{prop:crit} for modifications of two couplings to prove the existence of tethered domain walls.
Let $b$ and $e$ be two edges in $H$. We take $A$ to be the set of vertices which are endpoints of $b$ or $e$. 
We write $\eta_b$ or $\eta_e$ for the spin configuration up to a joint spinflip at the endpoints of the edge. Using the notation introduced in the one edge case, we have $\eta_b=\pm_b$, $\eta_e=\pm_e$.
The excitation energies for two different configurations $\eta_b,\eta_e$ and $\eta'_b,\eta'_e$ on $b$ and $e$ then reads
$\Delta\mathcal{E}^{ext}_J(\eta_b,\eta_e;\eta'_b,\eta'_e)$.
We set for convenience $C_1:=\frac{1}{2}\Delta\mathcal{E}^{ext}_J(+_b,+_e;-_b,+_e)$, $C_2:=\frac{1}{2}\Delta\mathcal{E}^{ext}_J(+_b,-_e;-_b,-_e)$, $C_3:=\frac{1}{2}\Delta\mathcal{E}^{ext}_J(+_b,+_e;+_b,-_e)$ and $C_4:=\frac{1}{2}\Delta\mathcal{E}^{ext}_J(-_b,+_e;-_b,-_e)$. Throughout the paper, it is understood that the $C_i$'s depend on $J$. By items 2 and 3 in Lemma~\ref{lem:indep} it is easily checked that $C_1-C_2=C_3-C_4$ and that the $C_i$'s are independent
of $J_b$ and $J_e$.

Moreover, taking $A$ to be the endpoints of $b$ or $e$ and $B$ to be the endpoints of just $e$, we have, again by Lemma~\ref{lem:indep}, that $\alpha_J^{+_e}$ 
is chosen from $\alpha_J^{+_b,+_e}$ and $\alpha_J^{-_b,+_e}$.
Applying the same argument as in the proof of Proposition \ref{prop:crit}, 
it follows that $\alpha_J^{+_e}$ is $\alpha_J^{+_b,+_e}$ if $J_b>C_1$ and is $\alpha_J^{-_b,+_e}$ if $J_b<C_1$.
Similarly, $C_2$ (resp. $C_3$, $C_4$) is the critical value for the state $\alpha_J^{-_e}$ (resp. $\alpha_J^{+_b}$, $\alpha_J^{-_b}$). 
We can now use these four values to describe the GSP $\alpha_J$ as a function of $J_b$ and $J_e$.

\begin{proposition}\label{prop:2bond_excit}
Let $b$ and $e$ be edges of $H$ and let $(\alpha_J^{\eta_b,\eta_e},\Delta \mathcal{E}^{ext}_J(\eta_b,\eta_e;\eta'_b,\eta'_e))_{\eta_b, \eta_b'=\pm_b; \eta_e, \eta_e'=\pm_e}$ 
be sampled from the excitation metastate $\mathcal{K}_J^\#$. 
There exists a critical set $C_J^{b,e}\subset \RR^2$ independent of $J_b$ and $J_e$
such that the GSP $\alpha_J$ is constant for $(J_b,J_e)$ in each of the four connected components of the complement of $C_J^{b,e}$
(see Figure \ref{fig:2bond_excit}).

Moreover, $C_J^{b,e}$ is the union of straight lines and is of three types determined as follows:
\begin{itemize}
\item If $C_1=C_2$, then $C_J^{b,e}$ is the union of the two lines $\{(C_1,J_e): J_e\in\RR \}$ and $\{(J_b,C_3): J_b\in\RR \}$.

\item If $C_1>C_2$, then $C_J^{b,e}$ is the union of the four rays 
$\{(J_b,C_3): J_b>C_1\}$, $\{(J_b,C_4): J_b<C_2\}$, $\{(C_1,J_e): J_e>C_3\}$ and $\{(C_2,J_e): J_e<C_4\}$ 
and the line segment $ \{(J_b,J_e): J_b-J_e=C_1-C_3 \text{ for }  C_2<J_b<C_1, ~C_4<J_e<C_3\}$.

\item If $C_1<C_2$, then $C_J^{b,e}$ is the union of the four rays 
$\{(J_b,C_3): J_b>C_2\}$, $\{(J_b,C_4): J_b<C_1\}$, $\{(C_1,J_e): J_e>C_4\}$ and $\{(C_2,J_e): J_e<C_3\}$ 
and the line segment $\{(J_b,J_e): J_b+J_e=C_1+C_4 \text{ for }   C_1<J_b<C_2, ~C_3<J_e<C_4\}$.
\end{itemize}
\end{proposition}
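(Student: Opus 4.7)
The plan is to express the GSP as the minimizer of four affine functions of $(J_b, J_e)$ and then perform a case analysis on the sign of $C_1-C_2$ (which equals $C_3-C_4$ by item 2 of Lemma~\ref{lem:indep}).

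I apply item 4 of Lemma~\ref{lem:indep} with $B=\emptyset$ and $A$ the set of endpoints of $b$ and $e$: the GSP is the $\alpha_J^{\eta_b,\eta_e}$ whose label minimizes $\Delta\mathcal{E}_J^{ext}(\eta_b,\eta_e;-_b,-_e)+h((\eta_b,\eta_e),(-_b,-_e),J_A)$ over the four choices of $(\eta_b,\eta_e)$. A direct computation of the interior term $h$ (it equals $-2J_b$ when the sign of $\eta_b$ differs between its two arguments, and similarly for $J_e$), together with item 2 used to rewrite the exterior differences in terms of $C_1,\dots,C_4$, reduces this to minimizing
\begin{equation*}
F_{--}=0,\quad F_{+-}=2(C_2-J_b),\quad F_{-+}=2(C_4-J_e),\quad F_{++}=2(C_1+C_4-J_b-J_e).
\end{equation*}
The identity $C_1+C_4=C_2+C_3$, noted in the text from applying item 2 along two different paths from $(+_b,+_e)$ to $(-_b,-_e)$, makes $F_{++}$ unambiguous, and item 3 shows that all $C_i$, and hence $C_J^{b,e}$, are independent of $J_b,J_e$. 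The six pairwise equalities among the $F$'s give candidate boundary lines: the verticals $J_b=C_1,C_2$, the horizontals $J_e=C_3,C_4$, and two diagonals $J_e-J_b=C_4-C_2$ and $J_b+J_e=C_1+C_4$. The critical set $C_J^{b,e}$ is the portion of these lines on which the equated pair is simultaneously the global minimum.

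I then split into three cases. If $C_1=C_2$ (so $C_3=C_4$), then $F_{--}+F_{++}=F_{+-}+F_{-+}$ identically and the minimization decouples in $\eta_b$ and $\eta_e$, producing the two perpendicular lines. If $C_1>C_2$ (so $C_3>C_4$), I partition the $(J_b,J_e)$-plane into three vertical strips. For $J_b>C_1$, one checks $F_{--}-F_{+-}=2(J_b-C_2)>0$ and $F_{-+}-F_{++}=2(J_b-C_1)>0$, leaving $F_{+-}$ and $F_{++}$ competing along $J_e=C_3$; for $J_b<C_2$, only $F_{--}$ and $F_{-+}$ survive, competing along $J_e=C_4$; for $C_2<J_b<C_1$ the inequalities $F_{--}-F_{+-}>0$ and $F_{++}-F_{-+}=2(C_1-J_b)>0$ eliminate $F_{--}$ and $F_{++}$, leaving $F_{+-}$ and $F_{-+}$ competing along the diagonal $J_e-J_b=C_4-C_2$, whose intersection with the strip is the segment from $(C_2,C_4)$ to $(C_1,C_3)$. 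Assembling these pieces gives precisely the four rays and one segment stated. The case $C_1<C_2$ is symmetric: in the middle strip $F_{+-}$ and $F_{-+}$ are dominated, leaving $F_{--}$ and $F_{++}$ competing along $J_b+J_e=C_1+C_4$.

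The only real obstacle is bookkeeping: verifying in each strip that the two claimed competitors really dominate the other two throughout the strip, and that the piecewise-linear critical curve hangs together properly at the junctions $(C_1,C_3)$ and $(C_2,C_4)$ so as to partition the complement into exactly four connected regions. All such verifications reduce to linear inequalities among the $C_i$'s and $J_b,J_e$, so nothing deeper than careful case analysis is required.
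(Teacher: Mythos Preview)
Your argument is correct and complete: reducing to the minimum of four explicit affine functions of $(J_b,J_e)$ and then performing the three-strip case analysis gives exactly the stated critical sets, and your verifications of the pairwise dominations in each strip are right.

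The paper takes a somewhat different route. Rather than going directly to $B=\emptyset$ in item~4 of Lemma~\ref{lem:indep}, it bootstraps from the one-edge Proposition~\ref{prop:crit}: for fixed $J_e$ it determines which two-bond excited state each one-bond excited state $\alpha_J^{\pm_b}$ equals (using the $C_i$'s as the thresholds for $\alpha_J^{\eta_b}$), and then computes $C_J^b$ as a piecewise-linear function of $J_e$ via a short calculation with items~2 and~4. Your approach is more direct computationally and arguably cleaner. The paper's approach, on the other hand, makes Remark~\ref{rem:contour} immediate---that $C_J^{b,e}$ is literally the union of the graphs of $J_e\mapsto C_J^b$ and $J_b\mapsto C_J^e$---which is the form in which the result is actually used downstream in the proof of Corollary~\ref{cor:dw}. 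From your minimization picture this interpretation is still available (the boundary between the $+_b$ and $-_b$ half-planes of the four-region partition is exactly the graph of $C_J^b(J_e)$), but it requires an extra sentence to extract.
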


\begin{remark}\label{rem:contour}
The result implies that the region where $J_b$ is between $C_1$ and $C_2$ corresponds exactly to the values of $J_b$ for which $b$ is in the critical contour of $e$, 
and vice-versa for the region where $J_e$ is between $C_3$ and $C_4$. 
In particular, the middle square in the diagrams of Figure \ref{fig:2bond_excit} when $C_1\neq C_2$ gives the values of $(J_b,J_e)$ for which $b$ and $e$ share the same critical contour. 
In all three cases, $C_J^{b,e}$ is the union of the two critical lines given by the graphs of $C_J^b$ as a function of $J_e$ and $C_J^e$ as a function of $J_b$.
\end{remark}

\begin{figure}[H]
\begin{center}
\includegraphics*[viewport= 1in 6in 11in 8in]{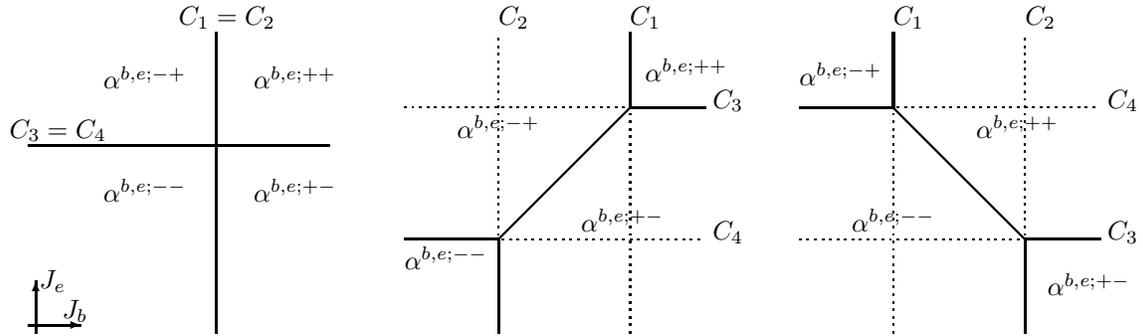}
\end{center}
\caption{The GSP $\alpha_J$ (from the set $\{\alpha_J^{\eta_b,\eta_e}~:~\eta_b=\pm1,\eta_e=\pm1 \}$) as a function of $(J_b,J_e)$. The cases $C_1=C_2$, $C_1>C_2$ and $C_1<C_2$ are depicted from left to right. The thick lines form the critical set $C_J^{b,e}$.}
\label{fig:2bond_excit}
\end{figure}

\begin{proof}
We prove the statement in the case $C_1>C_2$. The other cases are done the same way.
It suffices to compute the value of $C_J^b$ as a function of $J_e$
and $C_J^e$ as a function of $J_b$.
For a fixed $J_e$, we know from Proposition \ref{prop:crit} that the GSP $\alpha_J$ 
takes the value $\alpha_J^{+_b}$ or $\alpha_J^{-_b}$ depending on $J_b$. 
Since the exact values of $J_e$ for which $\alpha_J^{\eta_b}=\alpha_J^{\eta_b,\eta_e}$
are known from the $C_i$'s computed above, $C_J^b$ can be derived explicitly. 
For instance, in the region $C_4<J_e<C_3$ we have that $\alpha_J^{-_b}=\alpha_J^{-_b,+_e}$ and $\alpha_J^{+_b}=\alpha_J^{+_b,-_e}$.
Therefore,
$$ 
\begin{aligned}
C_J^b= \frac{1}{2}\Delta \mathcal{E}_{J}^{ext}(+_b,-_b)
=\frac{1}{2} \left( \Delta \mathcal{E}_{J}^{ext}(+_b,-_e;-_b,+_e) + 2J_e \right) 
=J_e+ C_1-C_3\ ,
\end{aligned}
$$
where we used item 4 in Lemma \eqref{lem:indep} for the second equality and item 2 for the third equality.
A similar argument in the region $J_e>C_3$ and $J_e<C_4$ yields
$C_J^b=C_1$ and $C_J^b=C_2$ respectively. The picture for $C_J^{b,e}$
is completed by computing $C_J^e$ as a function of $J_b$ the same way.  
\end{proof}

The excitation metastate is useful when investigating the interface between two GSP's because questions about interfaces translate into questions about critical values. 
Let us write $\mu$ for the product measure on two excitation metastates joint with the distribution of the
couplings:
$$
\mu := (\mathcal{K}_J^{\#} \times \mathcal{K}_J^{'\#}) \nu(dJ) \ . 
$$
We use the notation $\alpha_J$ for the states sampled through $\mathcal{K}_J$ and $\beta_J$ for the states sampled through $\mathcal{K}_J'$. 
We denote by $C_J^e(\alpha)$ and $C_J^e(\beta)$ the respective critical values for an edge $e$.
To illustrate the connection between interfaces and critical values, consider the event that a fixed edge $e$ belongs to the interface of two GSP's $\alpha_J$ and $\beta_J$.  
Suppose that this occurs with positive probability when $\alpha_J$ and $\beta_J$ are sampled from the two excitation metastates.  
In other words, suppose that 
\begin{equation}\label{eq:neqcrit_values2}
\mu(e\in\alpha_J\Delta \beta_J)=\nu\left(\mathcal{K}_J^\#\times \mathcal{K}_J'^{\#}\left(e\in\alpha_J\Delta\beta_J\right)\right)>0\ .
\end{equation}
Using Proposition~\ref{prop:crit}, we can show that the above is equivalent to the statement that $C_J^e(\alpha)\neq C_J^e(\beta)$ with positive probability:
\begin{equation}\label{eq:neqcrit_values}
\mu\left(C_J^e(\alpha)\neq C_J^e(\beta)\right)>0\ .
\end{equation}
This is intuitively clear from Figure \ref{fig:1bond_excit} and can be made precise as follows. 
We write the inner probability in \eqref{eq:neqcrit_values2} by first conditioning on the critical values and on the states that the GSP's take
as $J_e$ varies, which are the same as the $\eta_e$-excited states:
\begin{equation}\label{eq:condexcit}
\mu(e\in\alpha_J\Delta \beta_J)=\mu\left(\mathcal{K}_J^\#\times \mathcal{K}_J'^{\#}\left(e\in\alpha_J\Delta\beta_J\ 
\Big| \ (\alpha_J^{\eta_e},\beta_J^{\eta_e})_{\eta_e=\pm_e}, C_J^e(\alpha),C_J^e(\beta)\right) \right). 
\end{equation}
The inner conditioning essentially pins down two specific pictures of the form in Figure~\ref{fig:1bond_excit} (one for $\alpha_J$ and one for $\beta_J$).  
This conditional probability is a priori a function of (a) the coupling configuration $J$ and (b) the choices for critical values and excited states.  
However, for almost every fixed choice of critical values and excited states, it can be viewed simply as a function of $J_e$ 
which is defined for all values of $J_e \neq C_J^e(\alpha), C_J^e(\beta)$.  
By Proposition~\ref{prop:crit}, it is equal to $1$ when $J_e$ is between $C_J^e(\alpha)$ and $C_J^e(\beta)$ and equal to $0$ otherwise.  
Also, by Lemma \ref{lem:indep}, $J_e$ is independent of the variables on which we condition.
The probability (\ref{eq:condexcit}) can thus be computed by performing the integral over $J_e$ before the integral over all other variables.  
The result will be non-zero if and only if the critical values $C_J^e(\alpha)$ and $C_J^e(\beta)$ differ with positive probability.  
This shows the equivalence of (\ref{eq:neqcrit_values2}) and (\ref{eq:neqcrit_values}).

The following corollary is another example where the connection between interfaces and critical values is fruitful in proving results about interfaces.  
It shows that a non-empty interface $\alpha_J\Delta\beta_J$ can always be modified to pass through a given edge.  
This is trivial in the plane, but not so in the half-plane where edges at different distances to the $x$-axis are not equivalent up to translation.

\begin{corollary}\label{cor:dw}
$$\mu\left(\alpha_J\Delta \beta_J\neq \emptyset\right)>0
\Longleftrightarrow
\text{ for any fixed edge $b$ in $H$, }\mu\left(b\in\alpha_J\Delta \beta_J\right) >0 \ .
$$
\end{corollary}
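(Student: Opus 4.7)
The direction $(\Leftarrow)$ is immediate since $\{b\in\alpha_J\Delta\beta_J\}\subset\{\alpha_J\Delta\beta_J\neq\emptyset\}$. For $(\Rightarrow)$ I will work through the equivalence $\mu(e\in\alpha_J\Delta\beta_J)>0 \Leftrightarrow \mu(C_J^e(\alpha)\neq C_J^e(\beta))>0$ that was just established before the corollary. Since the edge set of $H$ is countable, $\sigma$-additivity applied to the hypothesis $\mu(\alpha_J\Delta\beta_J\neq\emptyset)>0$ produces some edge $e_0$ with $\mu(C_J^{e_0}(\alpha)\neq C_J^{e_0}(\beta))>0$. The goal then reduces to showing, for an arbitrary fixed edge $b$, that $\mu(C_J^b(\alpha)\neq C_J^b(\beta))>0$.

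The first reduction is horizontal translation invariance. The finite-volume measures $\mathcal{K}_n$ use periodic boundary conditions in the horizontal coordinate of $\Lambda_n$, so in the limit $\mu=(\mathcal{K}_J^{\#}\times\mathcal{K}_J^{'\#})\nu(dJ)$ is invariant under all integer horizontal shifts. Hence every horizontal translate of $e_0$ inherits the same positive-probability property, and it suffices to treat edges $b$ whose distance to the dual $x$-axis differs from that of $e_0$. This is the step made nontrivial by the lack of vertical translation invariance in the half-plane.

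For the vertical transfer I will apply Proposition~\ref{prop:2bond_excit} to the pair $(b,e)$, where $e$ is a horizontal translate of $e_0$ placed next to $b$. The proposition supplies four critical values $C_1,C_2,C_3,C_4$ for $\alpha$ and $C_1',C_2',C_3',C_4'$ for $\beta$, all independent of $J_b$ and $J_e$ by Lemma~\ref{lem:indep}. Moreover $C_J^e(\alpha),C_J^b(\alpha)$, and their primed analogues for $\beta$, are piecewise-constant functions of $(J_b,J_e)$ taking values in $\{C_3,C_4\}$ and $\{C_1,C_2\}$ respectively. Because $J_b$ and $J_e$ have densities and are independent of the $C_i$'s, I can condition them into any prescribed range on events of positive $\mu$-probability. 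Combining this freedom with the relation $C_1-C_2=C_3-C_4$ (valid for both $\alpha$ and $\beta$), a discrepancy between $C_J^e(\alpha)$ and $C_J^e(\beta)$ will in a typical configuration be converted into a discrepancy between $C_J^b(\alpha)$ and $C_J^b(\beta)$, and iterating the argument along a short chain of horizontal translates of $e_0$ that neighbor $b$ propagates positive probability from row to row.

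The main obstacle is a ``diagonal'' obstruction permitted by the sum rule: it is possible for $C_3-C_3'=C_4-C_4'\neq 0$ while simultaneously $C_1=C_1'$ and $C_2=C_2'$, so that the discrepancy at $e$ does not transfer to $b$ after a single application of Proposition~\ref{prop:2bond_excit}. Eliminating this exceptional case is the heart of the proof; to do so I plan to exploit the abundance of horizontal translates of $e_0$ with positive interface probability (supplied by translation invariance), combine two-edge excitations along a chain connecting $e_0$ to $b$, and pin auxiliary couplings via super-satisfaction (Proposition~\ref{prop:supersatisfy}) so that on a positive-measure event at least one two-bond application produces a genuine discrepancy at $b$. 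The continuity and full support of the coupling distribution are used throughout to keep all of the conditioning events of positive $\mu$-measure.
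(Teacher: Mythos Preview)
Your backward implication and the reduction to a single edge $e_0$ via countability are fine, and you correctly identify the real difficulty: the two-bond sum rule $C_1-C_2=C_3-C_4$ allows configurations where $C_3\neq C_3'$ and $C_4\neq C_4'$ (so $e$ is in the interface with positive probability) while $C_1=C_1'$ and $C_2=C_2'$ (so $b$ is not). But you do not actually resolve this obstruction. The last paragraph is a plan, not an argument: you say you will ``iterate along a chain'', ``combine two-edge excitations'', and ``pin auxiliary couplings via super-satisfaction'' so that ``at least one two-bond application produces a genuine discrepancy at $b$'', but no mechanism is given that forces this. Nothing you have written rules out the possibility that along your entire chain the $\alpha$- and $\beta$-critical values for $b$ agree while those for the chain edges differ. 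As written, this is a gap.

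The paper's proof avoids the iterative chain entirely by reversing the order of the two main ingredients. Instead of first picking $e$ in the interface and then trying to transfer the discrepancy to $b$, the paper first \emph{forces the critical contour of $b$ to hit the interface}: starting from any realization with $\alpha_J\Delta\beta_J\neq\emptyset$, one draws a dual path $P$ from $b$ to the interface and super-satisfies all edges of $\partial P\setminus(\alpha_J\Delta\beta_J)$. This modification moves no edge across its critical value in either metastate, so neither GSP flips, and the critical contour $\alpha_J^{+_b}\Delta\alpha_J^{-_b}$, which must pass through $b$ and cannot cross any super-satisfied edge, is funneled into the interface. Hence with positive $\mu$-probability there is an edge $e$ lying in \emph{both} $\alpha_J^{+_b}\Delta\alpha_J^{-_b}$ and $\alpha_J\Delta\beta_J$. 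For such an $e$, a single application of Proposition~\ref{prop:2bond_excit} suffices: the condition $e\in\alpha_J^{+_b}\Delta\alpha_J^{-_b}$ places $(J_b,J_e)$ in the diagonal strip where the graphs of $C_J^b(\alpha)$ and $C_J^e(\alpha)$ coincide (Remark~\ref{rem:contour}), so $C_J^b(\alpha)=C_J^b(\beta)$ would force $C_J^e(\alpha)=C_J^e(\beta)$ there, contradicting $e\in\alpha_J\Delta\beta_J$. This is exactly the step that neutralizes your ``diagonal obstruction'', and it requires knowing $e$ is in the critical contour of $b$, which your setup does not provide.
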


\begin{proof}
The backward implication is obvious, so we prove the forward one and assume that the probability on the left is positive.  
We will first show that
\begin{equation}\label{eq:critintersect}
\mu\left(\alpha_J^{+_b}\Delta\alpha_J^{-_b}\cap\alpha_J\Delta \beta_J\neq \emptyset \right)>0 \ .
\end{equation}
To do this, we consider a realization of the interface $\alpha_J \Delta \beta_J$ and 
make a coupling modification to force (\ref{eq:critintersect}) to occur.  
Denote by $\langle x,y \rangle$ the edge dual to $b$.  
Let $P$ be a path in the dual upper half-plane which connects $x$ to a dual vertex in $\alpha_J \Delta \beta_J$.  
Let $\partial P$ be the set of dual edges which are not in $P$ but which have at least one endpoint in $P$.  
We now ``super-satisfy'' (cf. Proposition~\ref{prop:supersatisfy} and the discussion immediately following it) all
dual edges in $\partial P$ which are not in the interface $\alpha_J \Delta \beta_J$ -- see Figure~\ref{fig:modification}.  
(A small amount of care is needed in the choice of $P$ to perform this procedure, namely that the set of lattice edges
dual to those in $\partial P\backslash \alpha\Delta \beta$ cannot contain a circuit.)
Precisely, notice that since these dual edges are not in the interface, we can modify their couplings one by one,
away from their respective critical values in both states, beyond the super-satisfied threshold \eqref{eq:supersatisfy}.
Consequently neither $\alpha_J$ nor $\beta_J$ will flip.  
By construction, the critical contour of $b$ in each state $\alpha_J$ and $\beta_J$ cannot 
contain any of these super-satisfied edges but it must contain $b$.  
Since the connected components of these contours which contain $b$ are either loops or doubly-infinite paths, 
this means that they must intersect the interface.   
Therefore (\ref{eq:critintersect}) holds.

\begin{figure}
\begin{center}
\includegraphics*[viewport= 1in 6in 4.8in 8.2in]{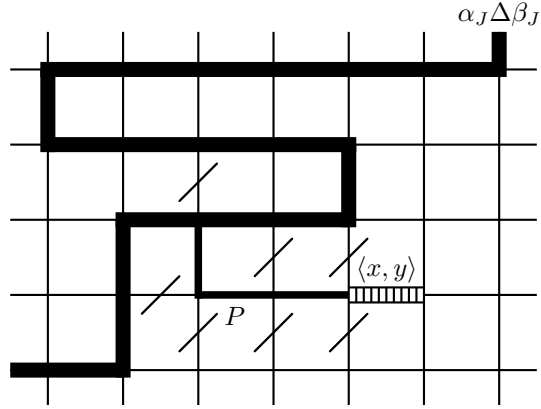}
\end{center}
\caption{An illustration of the coupling modification argument used in the proof of Corollary~\ref{cor:dw}.  All edges are in the dual lattice.  The edges crossed with diagonal line segments are super-satisfied.  Since the critical contour of $b$ (not pictured) cannot contain any crossed edges, it must intersect $\alpha_J \Delta \beta_J$.}
\label{fig:modification}
\end{figure}

From (\ref{eq:critintersect}), there exists a fixed edge $e$ such that 
\begin{equation}\label{eq:critintersect2}
\mu\left(e\in \alpha_J^{+_b}\Delta\alpha_J^{-_b}\cap\alpha_J\Delta \beta_J\right)>0 \ .
\end{equation}
If $e=b$, the corollary is proved, so assume that $e\neq b$.
Consider the excitation metastates conditioned on the critical sets $C_J^{b,e}$ as well as the $(\eta_b,\eta_e)$-excited states:
\begin{equation}\label{eq:critintersectcond}
\mathcal{K}_J^\#\times \mathcal{K}_J'^{\#}\left(e\in\alpha_J^{+_b}\Delta\alpha_J^{-_b}\cap \alpha_J\Delta\beta_J \ \Big| \ (\alpha_J^{\eta_b,\eta_e},\beta_J^{\eta_b,\eta_e})_{\eta_b=\pm_b,\eta_e=\pm_e}, C_J^{b,e}(\alpha),C_J^{b,e}(\beta)\right) \ .
\end{equation}
We will now proceed like in the proof of the equivalence of (\ref{eq:neqcrit_values2}) and (\ref{eq:neqcrit_values}).  
By Proposition \ref{prop:2bond_excit} applied both to $\alpha_J$ and to $\beta_J$ and with the help of Figure \ref{fig:2bond_excit}, 
when we fix values for the critical sets and the excited states, 
we can view this conditional probability as a function of the pair $(J_b,J_e)$ except on the critical sets $C_J^{b,e}(\alpha)$ and $C_J^{b,e}(\beta)$. 
It is equal to $1$ when $(J_b,J_e)$ is in the region where both 
i) $J_e$ is between $C_3(\alpha_J)$ and $C_4(\alpha_J)$, since for those values, $e$ is in the critical contour of $b$ for the state $\alpha_J$ (cf. Remark \ref{rem:contour}); 
and 
ii) $J_e$ is between $C_J^e(\alpha)$ and $C_J^e(\beta)$, since then $e\in\alpha_J\Delta\beta_J$.
We shall temporarily overload notation and write $C_J^b$ for either $\alpha_J$ or $\beta_J$ to denote the graph of $C_J^b$ as a function of $J_e$ and $C_J^e$ for the graph of $C_J^e$ as a function of $J_b$. 
If these two graphs coincide, we simply write $C_J^b=C_J^e$.

We now write (\ref{eq:critintersect2}) as an expectation of the conditional probability (\ref{eq:critintersectcond}).
By independence between the pair $(J_b,J_e)$ and the variables on which we condition, we may perform integration first over $J_b$ and $J_e$.
If $C_3(\alpha_J)=C_4(\alpha_J)$, the region for $(J_b,J_e)$ described by i) and ii) has zero measure so it does not contribute to the expectation \eqref{eq:critintersect2}. 
Therefore the non-trivial contribution to the expectation comes from the realizations where $C_3(\alpha_J)\neq C_4(\alpha_J)$.
We claim then that in the region for $(J_b,J_e)$ where i) is satisfied,
\begin{equation}
C_J^b(\alpha)=C_J^b(\beta)\Longrightarrow C_J^e(\alpha)=C_J^e(\beta)\ .
\label{eq:implic}
\end{equation}
To see this, note that in this region $C_J^e(\alpha)$ coincides with $C_J^b(\alpha)$ 
(they are both equal to a diagonal line segment).
Now if $C_J^b(\alpha)=C_J^b(\beta)$, 
this further implies that $C_J^b(\beta)$ contains this same diagonal line segment, and therefore $C_J^b(\beta)=C_J^e(\beta)$ in this region.
The sequence of equalities yields that $C_J^e(\alpha)=C_J^e(\beta)$ and \eqref{eq:implic} is proved.
However, the conclusion of \eqref{eq:implic} implies that 
the region described by i) and ii) has zero measure.
The full expectation would then be zero, contradicting (\ref{eq:critintersect2}).
We conclude that there must be a region of positive measure of the $(J_b,J_e)$-plane for which 
$$\mathcal{K}_J^\#\times \mathcal{K}_J'^{\#}\left( C_J^b(\alpha) \neq C_J^b(\beta)\ \Big| \ (\alpha^{\eta_b,\eta_e},\beta^{\eta_b,\eta_e})_{\eta_b=\pm_b,\eta_e=\pm_e}, C_J^{b,e}(\alpha),C_J^{b,e}(\beta)\right) $$
is non-zero. 
Integrating this first with respect to $J_b$ and $J_e$ and then with respect to the other variables, 
we see that $\mu\left(C_J^b(\alpha)\neq C_J^b(\beta) \right) > 0$,
and the corollary follows after observing the equivalence of (\ref{eq:neqcrit_values2}) and (\ref{eq:neqcrit_values}).
\end{proof}

\subsection{Uniqueness of the domain wall in the full plane}
\label{subsec:plane}
The framework of the excitation metastate developed in the last
section did not rely heavily on the choice of the finite-volume measure $\mathcal{K}_n^\#$ in equation \eqref{eq:findimexcited} or 
on the choice of underlying graph $H$.
In fact, one only needs the four properties of the excitation metastate stated in Lemma \ref{lem:indep}.
As we mentioned before, these are preserved when taking limits and convex combinations.
These properties are in particular fulfilled when one constructs the excitation metastate in the full plane $\ZZ^2$ from empirical measures as in \cite{NS00,NS01}.
The proof in \cite{NS00,NS01} that the interface between two GSP's in $\ZZ^2$ contains at most one domain wall is based on the results 
derived in the last section from these properties. 
We thus may state a more general version for use in the proof of Theorem~\ref{thm:maintheorem1}.  
\begin{theorem}[Newman-Stein]\label{thm:NSgeneral}
Let $\nu$ be the law of $J$, iid couplings on the edges of $\ZZ^2$, and let $\mu$ be a probability measure on 
$\big(J, (\alpha^{\eta_A}, \Delta\mathcal{E}^{ext}(\eta_A,\eta_A'))_{A,\eta_A,\eta_A'},(\beta^{\eta_A}, \Delta \mathcal{E}^{ext}(\eta_A,\eta_A'))_{A,\eta_A,\eta_A'}\big)$, 
where $A$ runs over all finite subsets of $\ZZ^2$ and $\eta_A,\eta_A'\in\tilde{\Sigma}_{A}$. 
Suppose that the marginal of $\mu$ on $J$ is $\nu$ and that $\mu$ is translation-invariant.
Moreover, suppose that the conditional measure of $\mu$ given $J$ on excited states $\alpha^{\eta_A}$ and $\beta^{\eta_A}$ 
and corresponding exterior excitation energies satisfies the properties of Lemma \ref{lem:indep} in $\ZZ^2$.
Then
$$  
\mu\Big(\alpha_J\Delta\beta_J=\emptyset \text{ or $\alpha_J\Delta\beta_J$ is connected}\Big)=1\ .
$$
\end{theorem}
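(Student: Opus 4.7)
The plan is to observe that every tool developed in Section~\ref{subsec:excitation} was derived purely from the four properties collected in Lemma~\ref{lem:indep} (together with the continuity of the single-coupling distribution implied by $\nu$), and so they remain valid for any measure $\mu$ satisfying the hypotheses, in particular on $\ZZ^2$. I would first explicitly transfer Propositions~\ref{prop:crit}, \ref{prop:supersatisfy} and \ref{prop:2bond_excit}, as well as Corollary~\ref{cor:dw}, to the present setting with $H$ replaced by $\ZZ^2$ and $H^*$ by its dual. Each proof carries over verbatim, since none of them used anything about $H$ beyond local planarity and the excitation metastate formalism; the dual-$x$-axis clause in the definition of infinite-volume GSP is no longer needed because any non-trivial dual circuit alone already characterizes the GSP property on $\ZZ^2$.

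Next, I would argue by contradiction following \cite{NS00,NS01}. Assume $\mu\big(\alpha_J \Delta \beta_J \text{ has at least two connected components}\big) > 0$. Because $\alpha_J$ and $\beta_J$ are both infinite-volume GSPs, any finite connected component of $\alpha_J \Delta \beta_J$ would be a dual cycle enclosing a finite set $S$ on which one of the two configurations could be flipped to strictly decrease its energy, contradicting \eqref{eq:gsprop}; hence every domain wall is a doubly-infinite self-avoiding path in the dual $\ZZ^2$. Translation invariance of $\mu$ and the $\ZZ^2$-version of Corollary~\ref{cor:dw} imply that the probability that any fixed dual edge $b$ lies on some domain wall is a positive constant independent of $b$. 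Combined with the assumption of two or more components, a standard averaging/ergodicity argument produces fixed dual edges $b$ and $e$, as far apart as desired, such that $\mu(b$ and $e$ lie on \emph{distinct} connected components of $\alpha_J \Delta \beta_J) > 0$.

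The contradiction is then obtained by a coupling-modification argument of the same flavor as in the proof of Corollary~\ref{cor:dw}: choose a finite dual path $P$ from $b$ to $e$ and super-satisfy (Proposition~\ref{prop:supersatisfy}) every dual edge in $\partial P$ that is not already in $\alpha_J \Delta \beta_J$; such modifications change neither $\alpha_J$ nor $\beta_J$ and place no edge of $P \cup \partial P$ on the critical contour of $b$ or $e$ other than along $P$ itself. By Proposition~\ref{prop:2bond_excit} and Remark~\ref{rem:contour}, the critical contour through $b$ in each state is then forced to travel along the super-satisfied channel and reach $e$, so $b$ and $e$ must share a single critical contour in both $\alpha_J$ and $\beta_J$, which by the equivalence of \eqref{eq:neqcrit_values2} and \eqref{eq:neqcrit_values} places them on the \emph{same} domain wall of $\alpha_J \Delta \beta_J$---a contradiction. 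The main obstacle is the planar/topological bookkeeping in this last step: one must show that $P$ can always be chosen so that the set of lattice edges dual to $\partial P \setminus (\alpha_J \Delta \beta_J)$ contains no circuit (so that super-satisfying them is consistent with a single realization of $J$ having positive $\nu$-density), and one must verify that the resulting forced intersection of critical contours with the two distinct domain walls is genuinely incompatible with planarity of $\ZZ^2$, exactly as in the original argument of \cite{NS00,NS01}.
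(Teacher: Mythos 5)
Your first two paragraphs are fine and essentially match the paper's own stance: the paper does not reprove this theorem at all, but simply observes that the original argument of \cite{NS00,NS01} uses only the four properties of Lemma~\ref{lem:indep} (plus translation invariance and the continuity of the coupling law), all of which are hypotheses here, so the proof transfers verbatim to any such $\mu$ on $\ZZ^2$. Transferring Propositions~\ref{prop:crit}, \ref{prop:supersatisfy}, \ref{prop:2bond_excit} and Corollary~\ref{cor:dw}, ruling out finite domain walls via \eqref{eq:gsprop}, and locating two fixed dual edges $b$ and $e$ on distinct doubly-infinite components with positive probability are all correct and consistent with that program.

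The gap is in your final step, which is where the actual content of the Newman--Stein theorem lives. Sharing a critical contour does not place $b$ and $e$ on the same domain wall of $\alpha_J\Delta\beta_J$: the critical contour $\alpha_J^{+_b}\Delta\alpha_J^{-_b}$ is an interface between two excited states of the \emph{same} metastate, whereas a domain wall is a component of the interface between states drawn from \emph{two different} metastates, and no tool in Section~\ref{subsec:excitation} connects the two notions at the level of connectivity. The equivalence of \eqref{eq:neqcrit_values2} and \eqref{eq:neqcrit_values}, and Corollary~\ref{cor:dw} itself, are purely edge-membership statements ($e\in\alpha_J\Delta\beta_J$ iff $J_e$ lies between two distinct critical values); they say nothing about which component of the interface an edge belongs to. Moreover, super-satisfying $\partial P\setminus(\alpha_J\Delta\beta_J)$ only forces the critical contour of $b$ to leave the channel through an interface edge, and it may do so through the domain wall containing $b$ immediately adjacent to $b$; it is not forced to reach $e$. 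The genuine Newman--Stein contradiction is of a different nature: it compares the coupling sums along segments of two adjacent doubly-infinite domain walls (which reverse sign between $\alpha_J$ and $\beta_J$) with the coupling sums along ``rungs'' joining them, using \eqref{eq:gsprop} for both states on the finite region so enclosed, together with translation invariance and a coupling-modification step. None of that appears in your sketch, so as written the proposal does not close; you would either need to import that argument explicitly (checking, as the paper does, that it uses only the properties of Lemma~\ref{lem:indep}) or supply a genuinely new mechanism for the contradiction.
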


\section{Proofs}

\subsection{Tethered domain walls}

We start with a general lemma about translation-invariant measures.
\begin{lemma}\label{lem:TIdensity}
Suppose that $\mu$ is a measure
on spin configurations in $\ZZ^2$ (or $H$) which is invariant under
horizontal translations.  
Let $A$ be an event and for any $x \in \ZZ$ let $A_x$ be the event
horizontally translated by distance $x$.  
Then with probability one, either $A_x$ does not
occur for any $x$ or $A_x$ occurs for a set of $x$ which has positive
density in $\ZZ$.
\end{lemma}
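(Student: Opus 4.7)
The plan is to reduce the statement to the Birkhoff ergodic theorem applied to the horizontal shift. Let $T$ denote the horizontal translation by one unit, so that $T$ is measure-preserving under $\mu$, and $\mathbf{1}_{A_x}(\omega) = \mathbf{1}_A(T^x\omega)$. Let $\mathcal{I}$ be the $\sigma$-field of $T$-invariant events. By Birkhoff's theorem, the symmetric Cesàro average
\begin{equation*}
D(\omega) := \lim_{N\to\infty}\frac{1}{2N+1}\sum_{x=-N}^{N}\mathbf{1}_{A_x}(\omega) = \EE_\mu[\mathbf{1}_A \mid \mathcal{I}](\omega)
\end{equation*}
exists for $\mu$-a.e.\ $\omega$. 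By construction, $D(\omega)$ is precisely the density (in the Cesàro sense) of the set $\{x \in \ZZ : A_x \text{ occurs at }\omega\}$, so on the event $\{D>0\}$ the set of such $x$ has positive density.

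It then remains to rule out the intermediate case where some $A_x$ occurs but $D=0$. For this, the key observation is that since $\{D=0\}\in\mathcal{I}$, the defining property of conditional expectation gives
\begin{equation*}
\mu\bigl(A\cap\{D=0\}\bigr) = \EE_\mu\bigl[\EE_\mu[\mathbf{1}_A\mid\mathcal{I}]\,\mathbf{1}_{\{D=0\}}\bigr] = \EE_\mu\bigl[D\,\mathbf{1}_{\{D=0\}}\bigr] = 0.
\end{equation*}
Because $\{D=0\}$ is $T$-invariant and $\mu$ is $T$-invariant, the translate of this identity yields $\mu(A_x\cap\{D=0\}) = 0$ for every $x\in\ZZ$. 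A countable union over $x$ then gives
\begin{equation*}
\mu\Bigl(\{D=0\}\cap\bigcup_{x\in\ZZ}A_x\Bigr) \;\leq\; \sum_{x\in\ZZ}\mu\bigl(A_x\cap\{D=0\}\bigr) \;=\; 0.
\end{equation*}

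Combining the two cases, with $\mu$-probability one either $D(\omega)=0$ and no $A_x$ occurs at all, or $D(\omega)>0$ and the set of $x$ for which $A_x$ occurs has positive density, as claimed. The only subtlety is the usual one of making sure one can invoke Birkhoff's theorem without assuming ergodicity; the argument above handles this by working with the conditional expectation on $\mathcal{I}$ rather than with a constant mean, so no ergodic decomposition is needed. I do not expect any genuine obstacle here — the argument is a textbook application of the ergodic theorem combined with translation invariance of $\mu$.
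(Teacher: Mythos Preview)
Your proof is correct. Both arguments rest on Birkhoff's ergodic theorem, but the paper proceeds via ergodic decomposition: it picks an ergodic component $\hat\mu$ of $\mu$, notes that in an ergodic system the time average equals the constant $\hat\mu(A_0)$, and then splits into the cases $\hat\mu(A_0)>0$ and $\hat\mu(A_0)=0$. You instead stay with $\mu$ itself, identify the density $D$ as the conditional expectation $\EE_\mu[\mathbf{1}_A\mid\mathcal{I}]$, and use the defining property of conditional expectation together with $T$-invariance of $\{D=0\}$ to rule out the intermediate case. Your route is slightly more self-contained in that it avoids invoking the existence of an ergodic decomposition (which requires a standard Borel setting, harmless here but still an extra ingredient); the paper's route is marginally more transparent about why the density is actually constant on each ergodic component. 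Either way the content is the same textbook fact.
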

\begin{proof}
Let $\hat \mu$ be an ergodic component of the measure $\mu$.  If $\hat
\mu (A_0) > 0$ then the ergodic theorem gives that with $\hat
\mu$-probability one, the set of $x$ such that $A_x$ occurs has positive density in ${\mathbb Z}$ (in fact, it has density $\hat \mu(A_0)$).
If, on the other hand, $\hat \mu (A_0) = 0$, then with $\hat
\mu$-probability one, no $A_x$ will occur.  This shows that the lemma holds
if we replace $\mu$ by $\hat \mu$.  Since this is true for each ergodic
component, the lemma follows for $\mu$.
\end{proof}

For this section, $\alpha_J$ and $\beta_J$ will refer to two GSP's in $H$, sampled independently from
excitation metastates $\mathcal{K}_J^{\#}$ and $\mathcal{K}_J^{'\#}$ for the same coupling configuration $J$.  
Recall the definition of $\mu$ as the joint distribution of $(\alpha,\beta,J)$,
\begin{equation}\label{eq:mudefinition}
\mu = (\mathcal{K}_J^{\#} \times \mathcal{K}_J^{'\#}) \nu(dJ)\ . 
\end{equation}
Note that the measure $\mu$ is horizontally translation-invariant (because of our choice of boundary conditions in constructing $\mathcal{K}_J^{\#}$ and $\mathcal{K}_J^{'\#}$).  

\begin{definition}
A {\it tethered domain wall} is a domain wall which intersects the dual $x$-axis.
\end{definition}
\begin{proposition}\label{prop:tethdensity}
Suppose that $\mu(\alpha_J \Delta \beta_J \neq \varnothing) > 0$.  Then the interface $\alpha_J \Delta \beta_J$ contains infinitely many tethered domain walls with positive $\mu$-probability.
\end{proposition}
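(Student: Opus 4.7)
The plan is to combine Corollary~\ref{cor:dw}, the horizontal translation invariance of $\mu$ together with Lemma~\ref{lem:TIdensity}, and the path portion of the ground state characterization~\eqref{eq:gsprop2}. These three ingredients will produce, respectively: positive probability that a fixed bottom-row edge lies in $\alpha_J\Delta\beta_J$; positive probability that infinitely many bottom-row edges do; and the structural fact that each tethered domain wall contains exactly one vertex of $X^*$, so that a ``many edges'' statement upgrades to a ``many walls'' statement.

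First, I would fix the bottom-row edge $b_0=\langle(0,0),(1,0)\rangle$, whose dual edge connects $(1/2,-1/2)\in X^*$ to $(1/2,1/2)$. Since $\mu(\alpha_J\Delta\beta_J\neq\emptyset)>0$ by hypothesis, Corollary~\ref{cor:dw} gives $\mu(b_0\in\alpha_J\Delta\beta_J)>0$. Let $A_x$ denote the analogous event for the translated edge $\langle(x,0),(x+1,0)\rangle$. By horizontal translation invariance of $\mu$, each $A_x$ has the same positive probability as $A_0$, so Lemma~\ref{lem:TIdensity} implies that with positive $\mu$-probability the set $\{x\in\ZZ : A_x \text{ occurs}\}$ has positive density in $\ZZ$, and in particular is infinite. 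On this event, the interface contains infinitely many dual edges incident to $X^*$.

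Second, I would show that each tethered domain wall contains exactly one vertex of $X^*$. By definition every tethered wall contains at least one such vertex, and since the only dual edge of $H^*$ incident to any vertex of $X^*$ is the vertical one going upward, the $X^*$-vertices of a tethered wall are in bijection with its $X^*$-incident dual edges. Suppose, for contradiction, that some domain wall contained two distinct vertices $u,v\in X^*$. Then there would exist a dual path $P^*$ in $H^*$ from $u$ to $v$ all of whose edges lie in $\alpha_J\Delta\beta_J$. Applying the path portion of~\eqref{eq:gsprop2} to $\alpha_J$ and to $\beta_J$ would give both $\sum_{P^*}J_{xy}\alpha_x\alpha_y>0$ and $\sum_{P^*}J_{xy}\beta_x\beta_y>0$; but on each edge of the interface $J_{xy}\alpha_x\alpha_y=-J_{xy}\beta_x\beta_y$, so these two sums are negatives of each other, a contradiction.

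Combining the two steps, on the positive-probability event produced in the first step the interface meets $X^*$ at infinitely many vertices, and by the second step these must belong to infinitely many distinct tethered domain walls. The main obstacle --- and the step that genuinely uses a feature of the half-plane geometry rather than earlier machinery --- is the second one: the path portion of~\eqref{eq:gsprop2} is precisely the tool that converts the density statement about tethered edges into a density statement about tethered walls. Everything else is assembly of results already established in the excerpt.
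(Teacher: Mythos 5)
Your proposal is correct and follows essentially the same route as the paper: Corollary~\ref{cor:dw} to place the interface on the dual $x$-axis with positive probability, Lemma~\ref{lem:TIdensity} to upgrade this to a positive-density (hence infinite) set of tethered points, and the path/circuit ground-state criterion \eqref{eq:gsprop2} to show that two distinct $X^*$-vertices cannot lie in the same domain wall (the paper phrases this as the connecting interface path having strictly negative energy in one of $\alpha_J$, $\beta_J$, which is exactly your sign-flip observation). No gaps.
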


\begin{proof}
By Corollary~\ref{cor:dw}, with positive $\mu$-probability we can find a dual edge incident to the dual $x$-axis
 which is in $\alpha_J \Delta \beta_J$ and therefore there exists at least one
tethered domain wall. Using Lemma~\ref{lem:TIdensity} with the event
\[ A = \{\mbox{the point } (0,-1/2) \mbox{ is contained in a tethered domain wall}\} \ , \]
we see that infinitely many points of the dual $x$-axis lie in tethered domain walls.  
We claim that these points must each lie in distinct domain walls. The assertion that there are infinitely many tethered domain walls will follow. If two of these points lie in the same domain wall then there must exist a path of dual edges connecting two of them which lies entirely in $\alpha_J \Delta \beta_J$.  But this is impossible since this path must have strictly negative energy in one of $\alpha_J$ or $\beta_J$, contradicting \eqref{eq:gsprop} (it cannot be zero since the coupling distribution is assumed continuous).
\end{proof}

\bigskip
We now investigate density properties of the tethered domain walls.  For $n \geq 1$ and $k \geq 0$, define the set
\[ I_{n,k} = \{ (x,y) \in \RR^2~:~ x \in [-n,n] \mbox{ and } y = k-1/2\} \]
and let $N_{n,k}$ be the number of distinct tethered domain walls intersecting $I_{n,k}$.  Write ${\mathbb E}_\mu$ for expectation w.r.t. $\mu$.

\begin{proposition}\label{prop:tethdensityest}
The sequence $({\mathbb E}_\mu (N_{n,k}))_{n=1}^\infty$ is subadditive for fixed $k \geq 0$.  Therefore, 
\begin{equation}\label{eq:densitylimit}
\lim_{n \to \infty} {\mathbb E}_\mu (N_{n,k})/n \mbox{ exists .}
\end{equation}
Furthermore, under the assumption that $\mu(\alpha_J \Delta \beta_J \neq \varnothing) > 0$, there exists $c > 0$ such that for all $n \geq 1$ and $k \geq 0$,
\begin{equation}\label{eq:densityest3}
{\mathbb E}_\mu(N_{n,k}) \geq cn\ .
\end{equation}
\end{proposition}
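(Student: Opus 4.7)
For the subadditivity claim, I would split the horizontal segment $I_{n+m,k}$ into two consecutive sub-segments of respective lengths $2n$ and $2m$, say $I^L = [-(n+m), n-m] \times \{k-1/2\}$ and $I^R = [n-m, n+m] \times \{k-1/2\}$. Every tethered domain wall intersecting $I_{n+m,k}$ must meet at least one of $I^L$, $I^R$, so $N_{n+m,k}$ is bounded above by the sum of the numbers of distinct tethered walls meeting the two sub-segments separately. Since $I^L$ and $I^R$ are horizontal translates of $I_{n,k}$ and $I_{m,k}$ and $\mu$ is horizontally translation-invariant by construction, taking expectations yields
\[
\EE_\mu(N_{n+m,k}) \leq \EE_\mu(N_{n,k}) + \EE_\mu(N_{m,k}),
\]
and existence of the limit in \eqref{eq:densitylimit} then follows from Fekete's subadditive lemma.

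For the lower bound \eqref{eq:densityest3} in the case $k=0$, the key structural input is that dual vertices on $X^*$ have degree one in $H^*$. Consequently the dual vertex $v_j := (j+1/2, -1/2)$ belongs to $\alpha_J\Delta\beta_J$ iff its unique incident dual edge, dual to the primal edge $e_j := \langle (j,0), (j+1,0)\rangle$, lies in the interface. Combining this with the observation from the proof of Proposition~\ref{prop:tethdensity} that distinct points of $X^*$ in the interface lie in distinct tethered walls, $N_{n,0}$ equals the number of $j \in \{-n, \ldots, n-1\}$ for which $e_j \in \alpha_J\Delta\beta_J$. Corollary~\ref{cor:dw} applied under the hypothesis $\mu(\alpha_J\Delta\beta_J \neq \varnothing) > 0$ gives $p_0 := \mu(e_0 \in \alpha_J\Delta\beta_J) > 0$, which by horizontal translation invariance of $\mu$ is independent of $j$. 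Summing over the $2n$ relevant dual vertices yields $\EE_\mu(N_{n,0}) = 2np_0$, which is the desired linear lower bound with $c = 2p_0$.

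For $k \geq 1$ the argument is more subtle because dual vertices of $I_{n,k}$ have degree four, a single wall may cross $I_{n,k}$ several times, and a wall containing a prescribed horizontal dual edge at height $k-1/2$ need not be tethered. My plan is to combine Corollary~\ref{cor:dw} with a coupling-modification argument in the spirit of its own proof: for each $j$, apply Corollary~\ref{cor:dw} to the vertical primal edge $\langle (j,k-1), (j,k)\rangle$ to put it in $\alpha_J\Delta\beta_J$ with positive probability, and then super-satisfy (cf.~Proposition~\ref{prop:supersatisfy}) the couplings on the dual edges flanking a downward dual corridor from that edge to $X^*$, funnelling the wall containing the horizontal dual edge down to the dual $x$-axis and forcing it to be tethered. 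Continuity of the coupling law ensures the modified event retains positive $\mu$-probability, and translation invariance in $j$ then yields a linear-in-$n$ lower bound. The hard part will be securing a constant $c > 0$ that is \emph{uniform} in $k$: the natural vertical corridor has height of order $k$, so the direct modification only produces a bound $c_k > 0$ that a priori degrades as $k$ grows. Removing this dependence appears to require the additional input that all domain walls in the interface are infinite (by the finite-loop argument implicit in Proposition~\ref{prop:tethdensity}), so that a tethered wall cannot be blocked from reaching a given height by any finite obstacle; combined with the ergodic statement of Lemma~\ref{lem:TIdensity} applied at height $k-1/2$, this should produce a $k$-uniform positive density of tethered walls crossing $I_{n,k}$.
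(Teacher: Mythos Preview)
Your subadditivity argument is correct and matches the paper's (which simply calls it ``a straightforward consequence of translation invariance'').

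Your treatment of the case $k=0$ is correct and in fact cleaner than the paper's. The paper does not compute $\EE_\mu(N_{n,0})$ directly; instead it invokes Lemma~\ref{lem:TIdensity} to get, with positive probability, $\liminf_n N_{n,0}/n > c_1$ for some deterministic $c_1$, and later passes to expectations via Fatou. Your observation that $N_{n,0}$ is literally a sum of $2n$ indicator variables, each of mean $p_0>0$ by Corollary~\ref{cor:dw} and translation invariance, bypasses all of that.

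The genuine gap is in your plan for $k\geq 1$. The coupling-modification / super-satisfaction idea is the wrong instrument here: as you already noticed, funnelling a wall down a corridor of height $k$ costs you the uniformity in $k$, and applying Lemma~\ref{lem:TIdensity} at height $k-1/2$ does not help because it says nothing about whether a wall crossing that height is \emph{tethered}. The idea you are missing is a short, purely combinatorial comparison between $N_{n,k}$ and $N_{n,0}$. A tethered domain wall touches $X^*$ at exactly one dual vertex (by the argument you already cited from Proposition~\ref{prop:tethdensity}); since that vertex has odd degree in the wall and all interior dual vertices have even degree in the interface, a tethered wall is necessarily infinite. Hence any tethered wall that meets $I_{n,0}$ but not $I_{n,k}$ must exit the finite box $[-n,n]\times[-1,k-1]$, and it cannot exit through the top (that would force an intersection with $I_{n,k}$) or the bottom (no dual edges below $X^*$). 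So it exits through one of the at most $2(k-1)$ dual vertices on the left or right side of the box. Distinct tethered walls are disjoint, so at most $2k$ of them can do this, giving the pointwise bound
\[
N_{n,k} \ \geq\ N_{n,0} - 2k \qquad \text{for all } n\geq 1,\ k\geq 0.
\]
Now combine this with your own exact computation $\EE_\mu(N_{n,0})=2np_0$ and with subadditivity:
\[
\frac{\EE_\mu(N_{n,k})}{n} \ \geq\ \inf_{m\geq 1}\frac{\EE_\mu(N_{m,k})}{m}
\ =\ \lim_{m\to\infty}\frac{\EE_\mu(N_{m,k})}{m}
\ \geq\ \lim_{m\to\infty}\frac{2mp_0-2k}{m}\ =\ 2p_0,
\]
which is exactly \eqref{eq:densityest3} with $c=2p_0$, uniform in $k$. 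This is the route the paper takes (phrased slightly differently, via $\liminf$ and Fatou, but the geometric inequality $N_{n,k}\geq N_{n,0}-2k$ is the crux).
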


\begin{proof}
Subadditivity is a straightforward consequence of translation invariance of $\mu$.  Therefore we focus on showing \eqref{eq:densityest3}.  To this end, we first show that there exists a deterministic $c_1>0$ such that with positive $\mu$-probability, the following holds for all $k \geq 0$:
\begin{equation}\label{eq:densityest}
\liminf_{n \to \infty} N_{n,k}/n > c_1\ .
\end{equation}
For the case $k =0$, let $\alpha_J \Delta \beta_J$ be a domain wall configuration which has tethered domain walls.  Using Lemma~\ref{lem:TIdensity} with $A$ as the event that the dual vertex $(1/2,-1/2)$ is in a tethered domain wall, we see that the set of $x \in \ZZ$ such that $(x+1/2,-1/2)$ is in a tethered domain wall has positive density in $\ZZ$.  Each such dual vertex is in exactly one tethered domain wall by~\eqref{eq:gsprop2}.  Therefore there exists $c_2>0$ (random) such that $\lim_{n \to \infty} N_{n,0}/n = c_2$.  By possibly decreasing $c_2$ for some configurations, we may find a deterministic $c_1>0$ such that with positive $\mu$-probability, 
\[ \liminf_{n \to \infty} N_{n,0}/n = \lim_{n \to \infty} N_{n,0}/n > c_1\ .  \]
For the rest of the proof of \eqref{eq:densityest}, restrict $\alpha_J \Delta \beta_J$ to be a domain wall configuration for which this holds.

By writing
\[ N_{n,k}/n = (N_{n,k}-N_{n,0})/n + N_{n,0}/n\ , \]
the relation (\ref{eq:densityest}) will hold in $\alpha_J \Delta \beta_J$ for all $k \geq 0$ if we show that for all $n \geq 1$,
\begin{equation}\label{eq:dwdensityrelation}
N_{n,k} - N_{n,0} \geq -2k\ .
\end{equation}
To this end, notice that $N_{n,0} - N_{n,k}$ is no bigger than the number of tethered domain walls which intersect $I_{n,0}$ but do not intersect $I_{n,k}$.  We estimate this number.  Such a tethered domain wall must originate in the set $I_{n,0}$ and cannot intersect the top side of the box $[-n,n] \times [-1,k-1]$ (i.e., the set $[-n,n] \times \{k-1\}$).  Since the domain wall must leave this box, it must leave on either the left or right side.  Therefore there exists some integer $m \in [1,k-1]$ and a dual vertex of the form $(-n-1/2,m-3/2)$ or of the form $(n+1/2,m-3/2)$ such that this vertex is in the tethered domain wall.  However, as noted before, each dual vertex can be contained in at most one tethered domain wall.  Therefore, the number of such tethered domain walls is at most $2k$.  This shows (\ref{eq:dwdensityrelation}) and completes the proof of \eqref{eq:densityest}.

We resume the proof of \eqref{eq:densityest3}.  It follows from \eqref{eq:densityest} that there is a $c_3>0$ such that for all $k \geq 0$,
\begin{equation}\label{eq:densityest2}
\lim_{n \to \infty} {\mathbb E}_\mu(N_{n,k}/n) > c_3\ .
\end{equation}
Indeed, we use Fatou's lemma and \eqref{eq:densitylimit} to see that
\[ \lim_{n \to \infty} {\mathbb E}_\mu(N_{n,k}/n) = \liminf_{n \to \infty} {\mathbb E}_\mu (N_{n,k}/n) \geq {\mathbb E}_\mu (\liminf_{n \to \infty} N_{n,k}/n)\ , \]
which is bounded away from 0 independently of $k$ by \eqref{eq:densityest}.  But now, subadditivity implies that for any $k \geq 0$, 
\[ {\mathbb E}_\mu(N_{n,k})/n \geq \inf_{n \geq 1} {\mathbb E}_\mu(N_{n,k})/n = \lim_{n \to \infty} {\mathbb E}_\mu(N_{n,k}/n) > c_3\ .\]
This proves \eqref{eq:densityest3}.

\end{proof}

\subsection{Restoring vertical translation invariance}
In the second half of the proof of Theorem \ref{thm:maintheorem3}, we create and study a measure $\mu^*$ on excited states and excitation energies on all of $\ZZ^2$.  
For this, let $\mu$ be the measure defined by (\ref{eq:mudefinition}).  For any integer $k \geq 0$ we define the shifted half-plane whose vertices are
\[ H_k = \{ (x,y-k)~:~ (x,y) \in H\} \ , \]
and whose edges are defined similarly as those of $H$ (see Section~\ref{sec:intro}).  
We define the shifted measure $\mu(k)$ simply as the push-forward measure of $\mu$ through the map which translates the origin to the vertex $(0,-k)$. 
Each $\mu(k)$ is a measure on coupling configurations for the edges of $H_k$ and corresponding pairs of excited states and
excitation energies for the finite subsets of $H_k$. We then define
\begin{equation}\label{eq:mutildedef}
\mu^*(k) = \frac{1}{k+1} \sum_{i=0}^k \mu(i) \ . 
\end{equation}
By tightness properties (see, e.g., Section~\ref{subsec:excitation} and \cite{NS01}) there is an increasing sequence $(n_k)$ such that $(\mu^*(n_k))$ converges to some measure $\mu^*$.
This is a measure on couplings for edges of $\ZZ^2$ and pairs of corresponding excitations, each of the form
$\left(\alpha_J^{\eta_A},\Delta \mathcal{E}_J^{ext}(\eta_A,\eta_A')\right)_{\substack{A\subset \ZZ^2 \text{ finite}\\ \eta_A,\eta_A'\in\tilde{\Sigma}_A}}$.
It is easy to see that it is invariant under vertical as well as horizontal translations in $\ZZ^2$.
Our aim is to use this measure to derive a contradiction from the assumption that there are multiple GSP's for the half-plane
- i.e., from $\mu(\alpha_J \Delta \beta_J \neq \varnothing) > 0$.

The proposition below says that ground state interfaces produced from $\mu^*$ would contain more than one domain wall.  
This fact along with Theorem \ref{thm:NSgeneral} will give a contradiction.  
We denote by $\alpha_J^*$ and $\beta_J^*$ the full-plane GSP's sampled from $\mu^*$.

\begin{proposition}\label{prop:multipledws}
Suppose that $\mu(\alpha_J \Delta \beta_J \neq \varnothing) > 0$.  Then with positive $\mu^*$-probability, the interface $\alpha^*_J \Delta \beta^*_J$ contains at least two domain walls.
\end{proposition}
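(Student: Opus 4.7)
The plan is to leverage the density of tethered domain walls from Proposition~\ref{prop:tethdensityest} to exhibit, inside a large box $B_M := [-M,M]^2$, at least two disjoint connected components of the restricted interface with uniformly positive $\mu^*$-probability, and then send $M \to \infty$ to deduce the existence of multiple domain walls in the full-plane interface.

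First I would convert the $L^1$ estimate $\mathbb{E}_\mu[N_{n,k}] \geq cn$ of Proposition~\ref{prop:tethdensityest} into a lower bound on $\mu(N_{n,k} \geq 2)$ uniform in $k \geq 0$. Since $N_{n,k}$ is bounded above by $2n+1$ (the number of dual vertices on $I_{n,k}$), the elementary inequality
\[
\mathbb{E}_\mu[N_{n,k}] \;\leq\; 1 + 2n\cdot \mu(N_{n,k} \geq 2)
\]
yields $\mu(N_{n,k} \geq 2) \geq q$ for some $q > 0$ once $n$ is fixed sufficiently large, uniformly in $k$. Writing $I_n := [-n,n] \times \{-\tfrac{1}{2}\}$ for a fixed horizontal dual segment in $\ZZ^2$ and pushing the estimate forward by the vertical shift of size $i$, we obtain
\[
\mu(i)\bigl(\text{at least two components of } \alpha_J\Delta\beta_J \text{ meet both } I_n \text{ and the line } y=-i-\tfrac{1}{2}\bigr) \;\geq\; q
\]
uniformly in $i \geq 0$.

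Next, fix $M \geq n$ and let $C_M$ denote the number of connected components of $(\alpha_J \Delta \beta_J) \cap B_M$ that meet $I_n$. Two distinct tethered walls under $\mu(i)$ are disjoint subgraphs of the half-plane interface (being different connected components of it), so their restrictions to $B_M$ lie in disjoint connected components of $(\alpha_J \Delta \beta_J) \cap B_M$, each of which contains a vertex of $I_n$; hence $\mu(i)(C_M \geq 2) \geq q$ for every $i \geq M$. Averaging gives $\mu^*(k)(C_M \geq 2) \geq q(k-M+1)/(k+1)$, and since $\{C_M \geq 2\}$ depends on only finitely many discrete spin variables, the finite-dimensional convergence $\mu^*(n_k) \to \mu^*$ transfers the inequality to the limit: $\mu^*(C_M \geq 2) \geq q$ for every $M \geq n$.

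Finally, $M \mapsto C_M$ is nonincreasing (since $I_n \subset B_n$, enlarging $B_M$ can only merge components meeting $I_n$, never create new ones), and $C_M \downarrow D^*_n$, the number of components of the full-plane interface $\alpha^*_J \Delta \beta^*_J$ meeting $I_n$: two vertices of $I_n$ in a common full-plane component are joined by a finite interface-path, which lies in $B_M$ for all $M$ sufficiently large, while two vertices in distinct full-plane components stay in distinct restrictions at every scale. Therefore $\{D^*_n \geq 2\} = \bigcap_{M \geq n} \{C_M \geq 2\}$, and continuity of $\mu^*$ from above yields
\[
\mu^*(D^*_n \geq 2) \;=\; \lim_{M \to \infty} \mu^*(C_M \geq 2) \;\geq\; q \;>\; 0.
\]
Since $D^*_n \geq 2$ forces $\alpha^*_J \Delta \beta^*_J$ to contain at least two domain walls, the proposition follows. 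The main subtlety is securing the bound $\mu(i)(C_M \geq 2) \geq q$ uniformly in both $i$ and $M$; this comes from the elementary second-moment-style upgrade of Proposition~\ref{prop:tethdensityest} together with the fact that distinct tethered walls are automatically disjoint components of the half-plane interface.
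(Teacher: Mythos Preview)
Your proof is correct and follows essentially the same approach as the paper's. Both arguments upgrade the expectation bound $\mathbb{E}_\mu[N_{n,k}]\geq cn$ to a uniform-in-$k$ probability bound $\mu(N_{n,k}\geq 2)\geq q$, push this through the averaging construction \eqref{eq:mutildedef} to obtain a uniform lower bound on the $\mu^*$-probability of a finite-box disconnection event, and then pass to the limit via monotonicity to conclude that the full-plane interface is disconnected. The paper packages the box-level event as $B_{m,n,k}$ (two vertices in domain walls in a small box not joined by an interface path in a larger box), whereas you use the count $C_M$ of components of the restricted interface meeting $I_n$; these are equivalent formulations of the same idea.
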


\begin{remark}
The proof below can easily be modified to show that with positive probability, $\alpha_J^* \Delta \beta_J^*$ contains infinitely many domain walls.  However, since a consequence of Theorem \ref{thm:NSgeneral} is that there is at most one domain wall, we need only show that there are at least two.
\end{remark}
\begin{proof}
Let $A_{n,k}$ be the event that at least two tethered domain walls intersect the box $[-n,n] \times [k-n,k+n]$.  
(We remark that for $n>k$ this box extends below the half-space, but that does not affect our argument.)
We first note that there exists $n_0$ such that
for $n>n_0$
\begin{equation}\label{eq:multipledws1}
\mu (A_{n,k}) > 0 \mbox{ uniformly in } k \geq 0\ .
\end{equation}
This follows from Proposition~\ref{prop:tethdensityest}.  To see this, we choose $c >0$ from \eqref{eq:densityest3} such that for all $k \geq 0$ and $n \geq 1$,
\[ {\mathbb E}_\mu(N_{n,k}) \geq cn\ . \]
We pick $n_0 = \lceil 2/c \rceil$ to give
\[ {\mathbb E}_\mu(N_{n_0,k}) \geq n_0c \geq 2\ , \]
from which (\ref{eq:multipledws1}) follows because
$$
\mu\left(A_{n_0,k}\right)\geq \mu\left(N_{n_0,k}\geq 2\right)
\geq {\mathbb E}_{\mu}\left(1_{\{N_{n_0,k}\geq 2\}}\ \frac{N_{n_0,k}-1}{2n_0-1}\right)  
\geq \frac{1}{2n_0-1} 
\ .
$$

We now finish the proof of Proposition~\ref{prop:multipledws}.  From (\ref{eq:multipledws1}), choose $a, N > 0$ such that for all $k \geq 0$, we have $\mu(A_{N,k}) \geq a$.  For $0 < n < m$ and $k \geq 0$, define $B_{m,n,k}$ as the event that there are two dual vertices in the box $[-n,n] \times [k-n,k+n]$ which are in domain walls but such that there is no path connecting them, in the box $[-m,m] \times [k-m,k+m]$, which consists of only dual edges in domain walls.  Since tethered domain walls do not intersect, we have $B_{m,n,k} \supset A_{n,k}$, and so (\ref{eq:multipledws1}) implies that for all $M \geq N$ and for all $k \geq 0$, we have $\mu(B_{M,N,k}) \geq a.$  By construction of the measure $\mu^*$ (recall the definition as the limit of (\ref{eq:mutildedef})), we see that for any $M \geq N$,
\[ \mu^*(B_{M,N,0}) \geq a\ . \]
By definition, the events $B_{m,n,k}$ are decreasing in $m$, i.e., $B_{m+1,n,k} \subset B_{m,n,k}.$  In particular, for $n=N$, $B_{\infty,n,k} := \lim_{m \to \infty} B_{m,n,k} = \cap_{m>n}^\infty B_{m,n,k}$ satisfies
\begin{equation}\label{eq:Keq1}
\mu^*(B_{\infty,N,0}) \geq a\ . 
\end{equation}
We note that for any $n,k$,
\[ \lim_{m \to \infty} \left[ B_{m,n,k} \cap \{ \alpha^*_J \Delta \beta^*_J \mbox{ is connected} \} \right] = \varnothing \ . \]
Hence,
\begin{equation}\label{eq:Keq2}
\mu^*(B_{\infty,n,k} \cap \{ \alpha^*_J \Delta \beta^*_J \mbox{ is connected} \}) 
= \mu^* \left( \lim_{m \to \infty} (B_{m,n,k} \cap \{ \alpha^*_J \Delta \beta^*_J \mbox{ is connected}\}) \right) = 0\ . 
\end{equation}
Combining (\ref{eq:Keq1}) and (\ref{eq:Keq2}),
\[ \mu^* (\{\alpha^*_J \Delta \beta^*_J \mbox{ is not connected}\}) \geq \mu^* (B_{\infty,N,0} \cap \{ \alpha^*_J \Delta \beta^*_J \mbox{ is not connected}\}) \]
\[ = \mu^* (B_{\infty,N,0}) \geq a\ . \]
This completes the proof.
\end{proof}

We can now use properties of the measure $\mu^*$ to prove the main result, Theorem~\ref{thm:maintheorem3}.
\begin{proof}[Proof of Theorem~\ref{thm:maintheorem3}]
We first verify that the measure $\mu^*$ satisfies the hypotheses of Theorem~\ref{thm:NSgeneral}. 
By construction, it is easily seen to be translation invariant for vertical as well as for horizontal translations of $\ZZ^2$.  
The conditional measure of $\mu^*$ given $J$ is a measure on pairs of excitations
$\left(\alpha_J^{\eta_A},\Delta \mathcal{E}_J^{ext}(\eta_A,\eta_A')\right)_{A,\eta_A,\eta_A'}$ for all $A\subset \ZZ^2$ finite and $\eta_A,\eta_A'\in\tilde{\Sigma}_{A}$.
It satisfies the four properties of Lemma \ref{lem:indep}.
Indeed, these are satisfied by the conditional measure of $\mu^*(k)$ for each $k$, 
since they are satisfied by the conditional measure of $\mu$. This is so for the latter because 
it was contructed from finite-volume measures, each of which satisfied the properties, and they are carried over in 
the infinite-volume limit.  
By Theorem~\ref{thm:NSgeneral}, the measure $\mu^*$ is supported on pairs of GSP's whose interface has at most one domain wall.  
This stands in contradiction to the result of Proposition~\ref{prop:multipledws}, completing the proof. 
\end{proof}

\bigskip
{\bf Acknowledgments.}
The research reported here was supported in part by NSF grants
DMS-0604869 and OISE-0730136 and an NSF postdoctoral fellowship to M. Damron. 
L.-P. Arguin is grateful for the financial support and hospitality of Anton Bovier and the Hausdorff Center for Mathematics in Bonn during part of this work.
All the authors thank the Centre de
Recherches Math\'{e}matiques at the Universit\'{e} de Montr\'{e}al
for its hospitality during June 2009 when some of the work reported
here was done during the workshop, Disordered Systems: Spin Glasses.

\end{document}